\newcounter{minutes}\setcounter{minutes}{\time}
\newcounter{hours}\setcounter{hours}{\time}
\newtheorem{theorem}{Theorem}
\newtheorem{lemma}{Lemma}
\newtheorem{corollary}{Corollary}
\title[Volterra functions]{Monotonicity Properties and functional inequalities for the Volterra and incomplete Volterra functions }
\author[K. Mehrez, S. M. Sitnik]{Khaled Mehrez\;and Sergei M. Sitnik}
\address{Khaled Mehrez\newline
D\'epartement de Math\'ematiques, Universit\'e de Kairouan, Tunisia \textit{and}\newline
D\'epartement de Math\'ematiques, Facult\'ee des sciences de Tunis, Universit\'e Tunis El Manar, Tunisia.}
\email{k.mehrez@yahoo.fr}
\address{Sergei M. Sitnik\newline
Belgorod State National Research University (BSU), Belgorod, Russia.}
\email{Sitnik@bsu.edu.ru}
\keywords{Volterra function, Incomplete Volterra function,  Functional inequalities,  completely monotonic, log-convex functions, Tur\'an type inequalities.}
\subjclass[2010]{11M35, 33D05, 33B15, 26A51.}
\begin{document}

\def\thefootnote{}
\footnotetext{ \texttt{File:~\jobname .tex,
          printed: \number\year-0\number\month-\number\day,
          \thehours.\ifnum\theminutes<10{0}\fi\theminutes}
} \makeatletter\def\thefootnote{\@arabic\c@footnote}\makeatother

\maketitle

\begin{abstract}
In this paper we prove  some monotonicity, log--convexity and log--concavity properties for the Volterra and incomplete Volterra functions.  Moreover, as
consequences of these results, we present some functional inequalities (like Tur\'an type inequalities) as well as we determined sharp upper and lower bounds for the normalized incomplete Volterra functions in terms of weighted power means.
\end{abstract}

\section{Introduction}
 The Volterra and  related  functions  to  be  considered  are  defined  in  the  following  way (see \cite{ER}--\cite{App2}):
\begin{eqnarray}
\nu(x)&=&\int_0^\infty\frac{x^t}{\Gamma(t+1)}dt,\\
\nu(x,\alpha)&=&\int_0^\infty\frac{x^{t+\alpha}}{\Gamma(t+\alpha+1)}dt,\\
\mu(x,\beta)&=&\int_0^\infty\frac{x^t t^\beta}{\Gamma(t+1)\Gamma(\beta+1)}dt\\
\mu_(x,\beta,\alpha)&=&\int_0^\infty\frac{x^{t+\alpha} t^\beta}{\Gamma(t+\alpha+1)\Gamma(\beta+1)}dt,
\end{eqnarray}
where $\alpha,\beta>-1$ and $x>0,$ but some particular notations are usually adopted in the special cases
\begin{equation}
\begin{split}
\alpha&=\beta=0,\;\;\nu(x)=\mu(x,0,0)\\
\alpha&\neq0,\beta=0,\;\;\nu(x,\alpha)=\mu(x,0,\alpha)\\
\alpha&=0,\beta\neq0,\;\;\mu(x,\beta)=\mu(x,\beta,0).
\end{split}
\end{equation}

Volterra functions were introduced by Vito Volterra in 1916. Its theory was thoroughly developed by Mhitar M. Dzhrbashyan, his and his coathors results were summed up in the monograph \cite{Dzh} in 1966. In this book many important results on Volterra functions, known and new, were gathered and introduced. They include results on: formulas for different integrals, derivatives and limits, representations via Mittag--Leffler and related functions, integral representations for different kind of domains, real and complex, solution of integral equations and inversion of integral transforms, asymptotics, connections with Laplace and Mellin transforms and Parseval identities. An important class of results is connected with generalized Fourier transforms on half--axes and a system of rays, including applications to integral representations of analytic functions on corner and other special complex domains. Many results on Volterra functions were also gathered in two books of A. Apelblat \cite{App1}--\cite{App2}, for important application cf. also \cite{GaMa}.

In this paper we define the incomplete Volterra functions $\mu(x,\beta,\alpha,s)$ and $\mu^*(x,\beta,\alpha,s)$ by
\begin{equation}
\mu(x,\beta,\alpha,s)=\int_s^\infty\frac{x^{t+\alpha} t^\beta}{\Gamma(t+\alpha+1)\Gamma(\beta+1)}dt,\;\;\textrm{and}\;\;\mu^*(x,\beta,\alpha,s)=\int_0^s\frac{x^{t+\alpha} t^\beta}{\Gamma(t+\alpha+1)\Gamma(\beta+1)}dt,
\end{equation}
where $\alpha,\beta>-1,x>0$ and $\geq0.$ Throughout this paper, we denote by $G_\beta$ and $G_\beta^*$  the normalized functions
\begin{equation}
G_\beta(x,\alpha,s)=\frac{\mu(x,\beta,\alpha,s)}{\mu(x,\beta,\alpha,0)}=\frac{\mu(x,\beta,\alpha,s)}{\mu(x,\beta,\alpha)},\;\;\textrm{and}\;\;G_\beta^{*} (x,\alpha,s)=\frac{\mu^*(x,\beta,\alpha,s)}{\mu(x,\beta,\alpha)}.
\end{equation}

Starting with this representations  we will obtain new properties of the Volterra and incomplete Volterra functions, including conditions for complete monotonicity, log-convexity and log-concavity in upper parameters. Moreover, as consequences of these results, we presented some functional inequalities, in particular some Tur\'an type inequalities are proved. So this paper is a continuation of author's results on Tur\'an type and related inequalities for a new class of Volterra functions, cf. \cite{SiMe1}--\cite{Me1}.

The plan of the paper is the following. In section 2,  we offer  complete monotonicity, log-convexity and log-concavity properties  in upper parameters of the Volterra function. As applications, we derive some inequalities for these special functions. Moreover, we prove that the following Tur\'an type inequality holds for  Volterra functions $\Gamma(\beta+1)\mu(x,\beta,\alpha),$ more precisely, for
all odd integers $n\geq1$ and real numbers $x > 0,\alpha,\beta>-1$ we have
$$\left(\frac{\partial^{n-1}\left[\Gamma(\beta+1)\mu(x,\beta,\alpha)\right]}{\partial \beta^{n-1}}\right)\left(\frac{\partial^{n+1}\left[\Gamma(\beta+1)\mu(x,\beta,\alpha)\right]}{\partial \beta^{n+1}}\right)-\left(\frac{\partial^{n}\left[\Gamma(\beta+1)\mu(x,\beta,\alpha)\right]}{\partial \beta^{n}}\right)^2\geq0.$$
In section 3, we proved that  functions $G_\beta$ and $G_\beta^*$ are log-concave on $(-1,\infty),$ As applications we derived two Tur\'an type inequalities involving these functions. Moreover, we derive that the function $G_\beta$ (resp. $G_\beta^*$) is increasing (resp. decreasing) in $\beta.$ As a consequence  we found sharp upper and lower bounds for  functions $G_\beta$ and $G_\beta^*$ in terms of weighted power means. Also  we prove that the function $G_\beta$ is  subadditive.

The next definitions will be used in the paper.

A function $f:[a,b]\subseteq\mathbb{R}\rightarrow (0,\infty)$  is said to be  log--convex if its natural logarithm $\log (g)$ is convex, that is, for all $x,y\in[a,b]$ and $\lambda\in[0,1]$ we have
$$f(\lambda x+(1-\lambda)y)\leq \left[f(x)\right]^\lambda\left[f(y)\right]^{1-\lambda}.$$
If the above inequality is reversed then $f$ is called a log--concave function. If $f$ is differentiable, then $f$ is log--convex (log--concave) if and only if $f^\prime/f$ is increasing (decreasing).

A function $g:[a,b]\subseteq(0,\infty)\rightarrow (0,\infty)$  is said to be geometrically (or multiplicatively) convex if $f$ is convex with respect to the geometric mean, i.e. if for all $x, y\in[a,b]$ and $\lambda\in[0,1]$ we have
	$$g(x^\lambda y^{1-\lambda})\leq[f(x)]^\lambda[f(y)]^{1-\lambda}.$$
	It is also known that if $f$ is differentiable, then $f$ is geometrically convex  if and only if $u\mapsto xf^\prime(x)/f(x)$ is increasing on $[a,b].$
	
	A function $h:(0,\infty)\longrightarrow\mathbb{R}$ possessing derivatives of all order is called a completely monotonic function if $$(-1)^n h^{(n)}(x)\geq0,\;\textrm{for\;all}\;x>0,\;n\in\mathbb{N}_0=\left\{0,1,2,...\right\}.$$
	The celebrated Bernstein Characterization Theorem gives a sufficient condition for the complete
monotonicity of a function $h$ in terms of the existence of some nonnegative locally integrable
function $H(x),\;x>0$ , referred to as the spectral function, for which
$$h(x)=\int_0^\infty e^{-xt}H(t)dt.$$
For some difficulties and wide--spread errors concerning generalization of this result to absolutely monotonic functions cf. \cite{Sit1}.

\section{Monotonicity Properties and functional inequalities for the Volterra function}

First, we prove complete monotonicity, log--convexity and log--concavity properties of the Volterra function $\mu(x,\beta,\alpha)$

\begin{theorem}\label{T1}Let $\alpha,\beta>-1.$ The following assertions are true:\\
\noindent \textbf{a.} The function $x\mapsto \mu(x,\beta,\alpha)$ is geometrically convex on $(0,\infty).$\\
\noindent \textbf{b.} The function $x\mapsto \mu(x^{-1},\beta,\alpha)$ is completely monotonic and log-convex on $(0,\infty)$ for all $\alpha\geq0.$\\
\noindent \textbf{c.} The function $\alpha\mapsto \mu(x,\beta,\alpha)$ is  log-concave on $(-1,\infty)$ for all $x>0.$\\
\noindent \textbf{d.} The function $\beta\mapsto \mu(x,\beta,\alpha)$ is log-concave on $(-1,\infty).$\\
\noindent \textbf{e.} The function $\beta\mapsto H(\beta)=\Gamma(\beta+1)\mu(x,\beta,\alpha)$ is log-convex on $(-1,\infty).$\\
\noindent \textbf{f.} The function $x\mapsto e^{-x}-\nu(-x)$ is completely monotonic on $(0,\infty).$\\
\end{theorem}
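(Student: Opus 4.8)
The plan is to produce an explicit Laplace (Bernstein) representation of the function $x\mapsto e^{-x}-\nu(-x)$ with a nonnegative, locally integrable spectral density, and then to invoke the Bernstein characterization theorem quoted above. Concretely, I will prove the integral representation
\begin{equation*}
e^{-x}-\nu(-x)=\int_0^\infty \frac{e^{-xt}}{t\left(\pi^2+\ln^2 t\right)}\,dt,\qquad x>0,
\end{equation*}
after which complete monotonicity is immediate: the density $t\mapsto \left[t(\pi^2+\ln^2 t)\right]^{-1}$ is strictly positive on $(0,\infty)$ and locally integrable (near $t=0^+$ and $t=\infty$ it behaves like $t^{-1}\ln^{-2}t$, whose primitive $-1/\ln t$ stays bounded), so the right-hand side is the Laplace transform of a nonnegative function and hence completely monotonic. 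This is exactly the hypothesis of Bernstein's theorem, with the stated density playing the role of the spectral function $H$.

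To establish the representation I would start from the Hankel loop integral for the reciprocal gamma function, $\frac{1}{\Gamma(t+1)}=\frac{1}{2\pi i}\int_{\mathcal H}e^{w}w^{-t-1}\,dw$, insert it into the defining integral $\nu(z)=\int_0^\infty z^{t}/\Gamma(t+1)\,dt$ at $z=-x$, and interchange the order of integration. The inner $t$-integral is then geometric, $\int_0^\infty (z/w)^{t}\,dt=-1/\operatorname{Log}(z/w)$ once the loop $\mathcal H$ is pushed out so that $|z/w|<1$, leaving
\begin{equation*}
\nu(-x)=\frac{1}{2\pi i}\int_{\mathcal H}\frac{e^{w}}{w\left(\operatorname{Log} w-\operatorname{Log}(-x)\right)}\,dw.
\end{equation*}
Collapsing $\mathcal H$ onto the negative real axis and writing $w=-t$, the integrand has a simple pole at $w=-x$ (where $\operatorname{Log} w=\operatorname{Log}(-x)$) whose residue equals $e^{-x}$, while the jump of $\operatorname{Log} w$ across the two edges of the cut contributes an integral over $t\in(0,\infty)$. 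Carrying out the elementary algebra of the two edge terms combines $\bigl[\ln(t/x)\bigr]^{-1}-\bigl[\ln(t/x)-2\pi i\bigr]^{-1}$ into a real kernel and, after absorbing the $e^{-x}$ residue, yields exactly the displayed formula.

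The main obstacle is the branch/pole bookkeeping in this collapse: the point $w=-x$ lies on the branch cut of $\operatorname{Log} w$, so one must fix the determination $\operatorname{Log}(-x)=\ln x+i\pi$ consistently, track whether the pole is captured by the deformation, and verify that the spurious imaginary parts from the upper and lower edges cancel to leave the manifestly real, nonnegative density $\left[t(\pi^2+\ln^2 t)\right]^{-1}$. Two consistency checks guard the computation: letting $x\to 0^+$ gives $\int_0^\infty dt/[t(\pi^2+\ln^2 t)]=\int_{-\infty}^\infty dy/(\pi^2+y^2)=1$, which matches $e^{0}-\nu(0)=1-0=1$, and letting $x\to\infty$ sends both sides to $0$. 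Alternatively one may bypass the contour argument by verifying the representation through Laplace transforms, using $\int_0^\infty e^{-px}\nu(x)\,dx=1/(p\ln p)$ together with the reflection across the branch; but the contour route makes the positivity of the spectral density transparent, which is precisely what Bernstein's theorem requires.
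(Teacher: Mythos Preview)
Your proposal addresses only part \textbf{f}; parts \textbf{a}--\textbf{e} are left entirely untreated. In the paper these are handled by short, independent arguments (H\"older for \textbf{a} and \textbf{e}, termwise complete monotonicity of $x^{-(t+\alpha)}$ for \textbf{b}, and log-concavity of the integrand in $\alpha$ and in $\beta$ combined with the Pr\'ekopa--Leindler/Brascamp--Lieb preservation result for \textbf{c} and \textbf{d}), so none of them is deep, but as written your proposal is not a proof of the stated theorem.

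For part \textbf{f} itself your argument and the paper's coincide in substance: both establish the Laplace representation
\[
e^{-x}-\nu(-x)=\int_0^\infty \frac{e^{-xt}}{t\bigl(\pi^2+\ln^2 t\bigr)}\,dt,\qquad x>0,
\]
and then read off complete monotonicity from Bernstein's theorem. The only difference is how the representation is obtained. The paper does no work here: it simply quotes the Ramanujan identity $e^{x}-\nu(x)=\int e^{xe^{t}}\frac{dt}{t^{2}+\pi^{2}}$ from Hardy's \emph{Ramanujan} and performs the substitution $t\mapsto \log t$. You instead derive the same formula from scratch via the Hankel loop for $1/\Gamma(t+1)$ and a contour collapse. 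Your route is more self-contained and the limiting checks at $x\to 0^{+}$ and $x\to\infty$ are a good sanity test; the paper's route is shorter because the analytic content is outsourced to a classical reference. The branch/pole bookkeeping you flag is genuine but routine once the determination $\operatorname{Log}(-x)=\ln x+i\pi$ is fixed, and your sketch is correct in outline.
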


\begin{proof}\textbf{a.} By using the Rogers--H\"older--Riesz inequality, we get
\begin{equation}
\begin{split}
\mu(x^\lambda y^{1-\lambda}, \beta,\alpha)&=\int_0^\infty \frac{x^{\lambda (t+\alpha)}y^{(1-\lambda)(t+\alpha)}t^\beta}{\Gamma(t+\alpha+1)\Gamma(\beta+1)}dt\\
&=\int_0^\infty \left[\frac{x^{t+\alpha}t^\beta}{\Gamma(t+\alpha+1)\Gamma(\beta+1)}\right]^\lambda \left[\frac{y^{t+\alpha}t^\beta}{\Gamma(t+\alpha+1)\Gamma(\beta+1)}\right]^{1-\lambda}dt\\
&\leq\left[\int_0^\infty\frac{x^{t+\alpha}t^\beta}{\Gamma(t+\alpha+1)\Gamma(\beta+1)}dt\right]^\lambda\left[\int_0^\infty\frac{y^{t+\alpha}t^\beta}{\Gamma(t+\alpha+1)\Gamma(\beta+1)}dt\right]^{1-\lambda}\\
&=\left[\mu(x,\beta,\alpha)\right]^\lambda\left[\mu(y,\beta,\alpha)\right]^{1-\lambda}.
\end{split}
\end{equation}
Thus, the function $x\mapsto \mu(x,\beta,\alpha)$ is geometrically convex on $(0,\infty)$ for each $\alpha,\beta>-1.$

\textbf{b}. By using the fact that the function $x\mapsto x^{-(t+\alpha)}$ is completely monotonic function on $(0,\infty),$ we obtain
\begin{equation*}
\begin{split}
(-1)^n \frac{\partial^n \mu(x^{-1},\beta,\alpha)}{\partial x^n}&=\int_0^\infty \frac{t^{\beta}}{\Gamma(t+\alpha+1)\Gamma(\beta+1)}\left(\frac{(-1)^n\partial^n x^{-(t+\alpha)}}{\partial x^n}\right)dt\geq0
\end{split}
\end{equation*}
for all $x\in(0,\infty)$ and $\alpha\geq0.$ Thus, the function $x\mapsto \mu(x^{-1},\beta,\alpha)$ is completely monotonic and consequently is log--convex, since every completely monotonic function is log--convex, see \cite[p. 167]{WI}.\\

\textbf{c.} We set
$$f_{x,\alpha}(t,\beta)=\frac{x^{t+\alpha}t^\beta}{\Gamma(t+\alpha+1)\Gamma(\beta+1)}.$$
Then,
$$\frac{\partial^2}{\partial\beta^2}\log f_{x,\alpha}(t,\beta)=-\psi^\prime(\beta+1)<0\;\;\textrm{and}\;\;\frac{\partial^2}{\partial\alpha^2}\log f_{x,\alpha}(t,\beta)=-\psi^\prime(t+\alpha+1)<0,$$
where $\psi$ is the digamma function. This implies that the functions $\beta\mapsto f_{x,\alpha}(t,\beta)$ and $\alpha\mapsto f_{x,\alpha}(t,\beta)$ are log-concave on $(-1,\infty)$ and consequently the functions $\beta\mapsto\mu(x,\beta,\alpha)$ and $\alpha\mapsto\mu(x,\beta,\alpha)$ are log-concave on $(-1,\infty)$ by means of Corollary 3.5 in \cite{BL}.

\textbf{d.} Again, by using the Rogers--H\"older--Riesz inequality, we get for all $\beta_1,\beta_2>-1$ and $\lambda\in[0,1]$ we have
\begin{equation}
\begin{split}
H(\lambda\beta_1+(1-\lambda)\beta_2)&=\int_0^\infty\frac{x^{t+\alpha}t^{\lambda\beta_1+(1-\lambda)\beta_2}}{\Gamma(t+\alpha+1)}dt\\
&=\int_0^\infty\left[\frac{x^{t+\alpha}t^{\beta_1}}{\Gamma(t+\alpha+1)}\right]^\lambda\left[\frac{x^{t+\alpha}t^{\beta_2}}{\Gamma(t+\alpha+1)}\right]^{1-\lambda} dt\\
&\leq\left[\int_0^\infty\frac{x^{t+\alpha}t^{\beta_1}}{\Gamma(t+\alpha+1)}dt\right]^\lambda\left[\int_0^\infty\frac{x^{t+\alpha}t^{\beta_2}}{\Gamma(t+\alpha+1)}dt\right]^{1-\lambda}\\
&=[H(\beta_1)]^\lambda[H(\beta_2)]^{1-\lambda}.
\end{split}
\end{equation}

\textbf{f.} From the Ramanujan identity \cite[p. 196]{HA}
\begin{equation}
e^x-\nu(x)=\int_0^\infty e^{xe^t}\frac{dt}{t^2+\pi^2},
\end{equation}
we obtain
\begin{equation}
e^x-\nu(x)=\int_0^\infty e^{xt}\frac{dt}{t(\log^2(t)+\pi^2)}.
\end{equation}
 Therefore, we deduce that all prerequisites of the Bernstein Characterization
Theorem for the complete monotone functions are fulfilled, that is, the function $x\mapsto e^{-x}-\nu(-x) $ is completely monotonic on $(0,\infty).$
Which evidently completes the proof of the Theorem \ref{T1}.
\end{proof}

Now, we provide new inequalities for the Volterra function $\mu(x,\beta,\alpha).$

\begin{corollary}\label{TT90}The following inequalities hold true:\\
\noindent \textbf{a.} For all $x,y>0$ and $\alpha\geq0$ we have
\begin{equation}\label{i1}
\mu(\sqrt{xy},\beta,\alpha)\leq\sqrt{\mu(x,\beta,\alpha)\mu(y,\beta,\alpha)}.
\end{equation}
\noindent \textbf{b.}  The following Tur\'an type inequality
\begin{equation}\label{TU}
\mu^2(x,\beta,\alpha+1)-\mu(x,\beta,\alpha) \mu(x,\beta,\alpha+2)\geq0,
\end{equation}
holds true for all $\alpha,\beta>-1$ and $x>0.$\\
\noindent \textbf{c.} The following Tur\'an type inequalities
\begin{equation}\label{Y9}
0\leq\mu^2(x,\beta+1,\alpha)-\mu(x,\beta,\alpha)\mu(x,\beta+2,\alpha)\leq\frac{\mu^2(x,\beta+1,\alpha)}{\beta+2},
\end{equation}
hold true for all $\alpha,\beta>-1,\;x>0.$\\
\noindent \textbf{d.} The following inequality
\begin{equation}\label{MAL}
\nu(-x)\nu(-y)+\nu(-x-y)\leq e^{-x}\nu(-y)+e^{-y}\mu(-x),
\end{equation}
holds true for all $x,y>0.$
\end{corollary}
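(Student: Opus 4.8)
The plan is to obtain each of the four inequalities from the structural results of Theorem \ref{T1}, by specializing the convexity parameters and, in the last two parts, combining several of them.

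Part \textbf{a} is immediate from geometric convexity (Theorem \ref{T1}\,\textbf{a}): taking $\lambda=\tfrac12$ in the defining inequality $\mu(x^{\lambda}y^{1-\lambda},\beta,\alpha)\le[\mu(x,\beta,\alpha)]^{\lambda}[\mu(y,\beta,\alpha)]^{1-\lambda}$ gives \eqref{i1} at once. Part \textbf{b} is the midpoint form of log-concavity in $\alpha$ (Theorem \ref{T1}\,\textbf{c}): with $f(\alpha)=\mu(x,\beta,\alpha)$, concavity of $\log f$ yields $\log f(\alpha+1)\ge\tfrac12\big(\log f(\alpha)+\log f(\alpha+2)\big)$, and exponentiating produces \eqref{TU}.

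For part \textbf{c}, I would get the left inequality in \eqref{Y9} exactly as in \textbf{b}, now from log-concavity in $\beta$ (Theorem \ref{T1}\,\textbf{d}) evaluated at $\beta,\beta+1,\beta+2$. The upper bound requires playing the two $\beta$-results against one another: I would invoke log-convexity of $H(\beta)=\Gamma(\beta+1)\mu(x,\beta,\alpha)$ (Theorem \ref{T1}\,\textbf{e}) in the form $H^{2}(\beta+1)\le H(\beta)H(\beta+2)$. Inserting the definition of $H$ and using $\Gamma(\beta+2)=(\beta+1)\Gamma(\beta+1)$ and $\Gamma(\beta+3)=(\beta+2)(\beta+1)\Gamma(\beta+1)$, the common factor $(\beta+1)\Gamma(\beta+1)^{2}$ cancels and one is left with
\begin{equation*}
(\beta+1)\,\mu^{2}(x,\beta+1,\alpha)\le(\beta+2)\,\mu(x,\beta,\alpha)\mu(x,\beta+2,\alpha).
\end{equation*}
Solving for $\mu(x,\beta,\alpha)\mu(x,\beta+2,\alpha)$ and subtracting from $\mu^{2}(x,\beta+1,\alpha)$ yields exactly the upper bound $\tfrac{1}{\beta+2}\mu^{2}(x,\beta+1,\alpha)$.

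Part \textbf{d} is the one of a genuinely different flavour, and where I expect the real work. I would set $\varphi(x)=e^{-x}-\nu(-x)$ and first reduce \eqref{MAL} to a multiplicative statement about $\varphi$. Substituting $\nu(-x)=e^{-x}-\varphi(x)$ (and the analogues for $y$ and $x+y$) into \eqref{MAL}, the terms in $e^{-(x+y)}$ and the cross terms $e^{-x}\varphi(y)$, $e^{-y}\varphi(x)$ cancel, and the inequality collapses to the supermultiplicativity
\begin{equation*}
\varphi(x+y)\ge\varphi(x)\,\varphi(y),\qquad x,y>0.
\end{equation*}
To prove this I would use that $\varphi$ is completely monotonic (Theorem \ref{T1}\,\textbf{f}), hence log-convex, so $\psi=\log\varphi$ is convex on $[0,\infty)$; moreover $\varphi(0)=e^{0}-\nu(0)=1$ since $\nu(0)=0$, so $\psi(0)=0$. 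A convex function with $\psi(0)=0$ is superadditive: from $\psi(x)\le\frac{x}{x+y}\psi(x+y)+\frac{y}{x+y}\psi(0)$ together with the symmetric estimate for $\psi(y)$, adding gives $\psi(x)+\psi(y)\le\psi(x+y)$, that is $\varphi(x)\varphi(y)\le\varphi(x+y)$. The main obstacle is recognizing that \eqref{MAL} is precisely this supermultiplicativity after the substitution; once that is seen, the convexity-and-boundary-value argument closes the proof.
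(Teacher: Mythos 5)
Your proposal is correct. For parts \textbf{a}--\textbf{c} it coincides with the paper's argument: part \textbf{a} is geometric convexity at $\lambda=\tfrac12$, part \textbf{b} is midpoint log-concavity in $\alpha$ with $\alpha_1=\alpha$, $\alpha_2=\alpha+2$, and for part \textbf{c} you supply exactly the computation that the paper leaves implicit (left bound in \eqref{Y9} from log-concavity of $\beta\mapsto\mu(x,\beta,\alpha)$ at $\beta,\beta+1,\beta+2$; right bound from $H^{2}(\beta+1)\le H(\beta)H(\beta+2)$ after cancelling the Gamma factors, which is carried out correctly). The only genuine divergence is in part \textbf{d}. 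Both you and the paper reduce \eqref{MAL} (reading $\mu(-x)$ as the evident typo for $\nu(-x)$) to the supermultiplicativity $\varphi(x)\varphi(y)\le\varphi(x+y)$ of $\varphi(x)=e^{-x}-\nu(-x)$; the paper then invokes Kimberling's theorem \cite{KI} for continuous completely monotonic functions mapping $(0,\infty)$ into $(0,1)$, checking the range condition via the Ramanujan integral representation and $\int_{-\infty}^{\infty}dt/(t^{2}+\pi^{2})=1$. You instead prove the supermultiplicativity directly: complete monotonicity (Theorem \ref{T1}\,\textbf{f}) gives log-convexity, $\nu(0)=0$ gives $\varphi(0^{+})=1$, and a convex $\psi=\log\varphi$ with $\psi(0)=0$ is superadditive. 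This is in substance the standard proof of Kimberling's lemma, so nothing essential changes; what your version buys is self-containedness, it bypasses the range verification from the integral representation (where the paper's stated limits of integration are in fact slightly inconsistent), and it identifies the constant $1$ in the range condition as exactly the boundary value $\varphi(0^{+})$.
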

\begin{proof}The inequality (\ref{i1}) is an immediate consequence of the fact that the function $x\mapsto \mu(x,\beta,\alpha)$ is geometrically convex on $(0,\infty).$
Now, focus on the Tur\'an type inequality (\ref{TU}). Since the function $\alpha\mapsto \mu(x,\beta,\alpha)$ is log--concave on $(-1,\infty)$ for  all $x>0,$  it follows that for all $\alpha_1,\alpha_2\geq0,\;\lambda\in[0,1]$ and $x>0,$ we have
$$\mu(x,\beta,\lambda\alpha_1+(1-\lambda)\alpha_2)\geq\left[\mu(x,\beta,\alpha_1)\right]^\lambda\left[\mu(x,\beta,\alpha_2)\right]^{1-\lambda}.$$
Choosing $\alpha_1=\alpha, \alpha_2=\alpha+2$ and $\lambda=\frac{1}{2},$ the above inequality reduces to the Tur\'an type inequality (\ref{TU}).  
 The Tur\'an type inequalities (\ref{Y9}) follows immediately using the fact that the function $H(\beta)$ is log--convex and the function $\mu(x,\beta,\alpha)$ is log--concave on $(-1,\infty)$. It remains to prove (\ref{MAL}). But using the fact that the function $x\mapsto e^{-x}-\nu(-x)$ is completely monotonic on $(0,\infty),$ we have
\begin{equation}
e^{-x}-\nu(-x)\leq \int_{-\infty}^\infty\frac{dt}{t^2+\pi^2}=1.
\end{equation}
Therefore, the function $x\mapsto e^{-x}-\nu(-x)$ maps $(0,\infty)$ into $(0,1),$ and consequently the inequality (\ref{MAL}) holds true, by the Kimberling's result \cite{KI}: if a function $f,$ defined
on $(0,\infty),$ is continuous and completely monotonic and maps $(0,\infty)$ into $(0,1),$ then
\begin{equation}\label{kim}
f(x)f(y)\leq f(x+y).
\end{equation}
The proof of Theorem \ref{TT90} is complete.
\end{proof}

Now, we define the function $F_{x,\alpha}(\beta)$ by
\begin{equation}
F_{x,\alpha}(\beta)=\Gamma(\beta+1)\mu(x,\beta,\alpha),
\end{equation}
where $\alpha,\beta>-1$ and $x>0.$

\begin{theorem}\label{TT3}For all odd integers $n\geq1$ and real numbers $x> 0$ and $\alpha>-1,$ the following Tur\'an type inequality
\begin{equation}\label{EEE}
\Delta_n^{(x,\alpha)}(\beta)=\left(\frac{\partial^{n-1}F_{x,\alpha}(\beta)}{\partial \beta^{n-1}}\right)\left(\frac{\partial^{n+1}F_{x,\alpha}(\beta)}{\partial \beta^{n+1}}\right)-\left(\frac{\partial^{n}F_{x,\alpha}(\beta)}{\partial \beta^{n}}\right)^2\geq0,
\end{equation}
holds true for all $\beta>-1.$
\end{theorem}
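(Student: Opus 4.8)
The plan is to identify every $\beta$-derivative of $F_{x,\alpha}$ with a logarithmic moment of a single positive measure and then read the inequality off from the Cauchy--Schwarz inequality; this is precisely the generalization of the log-convexity assertion in Theorem \ref{T1}(e), which is exactly the case $n=1$. First I would cancel the Gamma factor, writing
\begin{equation*}
F_{x,\alpha}(\beta)=\Gamma(\beta+1)\mu(x,\beta,\alpha)=\int_0^\infty\frac{x^{t+\alpha}}{\Gamma(t+\alpha+1)}\,t^\beta\,dt.
\end{equation*}
Differentiating $n$ times under the integral sign in $\beta$ and using $\partial_\beta t^\beta=t^\beta\log t$ gives
\begin{equation*}
\frac{\partial^n F_{x,\alpha}(\beta)}{\partial\beta^n}=\int_0^\infty\frac{x^{t+\alpha}}{\Gamma(t+\alpha+1)}\,t^\beta(\log t)^n\,dt .
\end{equation*}
Differentiation under the integral is legitimate because $1/\Gamma(t+\alpha+1)$ decays faster than any exponential as $t\to\infty$, so every such integral converges absolutely: the factor $t^\beta(\log t)^n$ is harmless at infinity, and near $t=0$ it is integrable for $\beta>-1$.

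Next I would introduce the positive measure $d\rho(t)=\dfrac{x^{t+\alpha}}{\Gamma(t+\alpha+1)}t^\beta\,dt$ on $(0,\infty)$ together with its inner product $\langle f,g\rangle=\int_0^\infty f(t)g(t)\,d\rho(t)$. With this notation
\begin{equation*}
\frac{\partial^k F_{x,\alpha}(\beta)}{\partial\beta^k}=\int_0^\infty(\log t)^k\,d\rho(t)=\big\langle (\log t)^k,\,1\big\rangle
\end{equation*}
is the $k$-th logarithmic moment of $\rho$. The crucial point is that $n$ is \emph{odd}: then $(n-1)/2$ and $(n+1)/2$ are nonnegative integers, so $(\log t)^{(n-1)/2}$ and $(\log t)^{(n+1)/2}$ are genuine real-valued functions on $(0,\infty)$ and one has the factorization $(\log t)^n=(\log t)^{(n-1)/2}\,(\log t)^{(n+1)/2}$.

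Applying the Cauchy--Schwarz inequality to these two functions then yields
\begin{equation*}
\left(\frac{\partial^n F_{x,\alpha}}{\partial\beta^n}\right)^2=\big\langle(\log t)^{\frac{n-1}{2}},(\log t)^{\frac{n+1}{2}}\big\rangle^2\le\big\|(\log t)^{\frac{n-1}{2}}\big\|^2\,\big\|(\log t)^{\frac{n+1}{2}}\big\|^2=\frac{\partial^{n-1}F_{x,\alpha}}{\partial\beta^{n-1}}\cdot\frac{\partial^{n+1}F_{x,\alpha}}{\partial\beta^{n+1}},
\end{equation*}
which is precisely $\Delta_n^{(x,\alpha)}(\beta)\ge0$.

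The only genuine obstacle is the oddness of $n$, and it is essential rather than a technicality: it is exactly what permits the factorization of $(\log t)^n$ into a product of two real functions that are square-integrable against $\rho$, so that Cauchy--Schwarz applies directly. For even $n$ the analogous split would require half-integer powers of the possibly negative quantity $\log t$, and the corresponding moment inequality can in fact fail (as it does already for symmetric measures, where an odd moment vanishes while the two flanking moments do not). The remaining ingredients---differentiability under the integral and convergence of the logarithmic moments---are routine consequences of the super-exponential decay of $1/\Gamma(t+\alpha+1)$.
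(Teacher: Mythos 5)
Your proof is correct and coincides with the paper's own second argument, which applies the Cauchy--Bunyakovskii inequality to the split $\log^n(t)=\left[\log^{n-1}(t)\right]^{1/2}\left[\log^{n+1}(t)\right]^{1/2}$ against the same positive weight; your version with integer exponents $(\log t)^{(n-1)/2}$ and $(\log t)^{(n+1)/2}$ is a marginally cleaner way to phrase the same step, since it makes the role of the oddness of $n$ transparent without taking square roots of $\log^{n\pm1}(t)$. (The paper also offers a first proof by symmetrizing a double integral, but that is an alternative to, not a prerequisite for, the Cauchy--Schwarz route you chose.)
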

\begin{proof}Let  $n\geq1$ be a odd integers and real numbers $x> 0$ and $\alpha>-1.$ Then we get
\begin{equation}\label{k!}
\begin{split}
\Delta_n^{(x,\alpha)}(\beta)&=\int_0^\infty\frac{x^{t+\alpha}t^\beta\;\log^{n-1}(t)}{\Gamma(t+\alpha+1)}dt\int_0^\infty\frac{x^{t+\alpha}t^\beta\;\log^{n+1}(t)}{\Gamma(t+\alpha+1)}dt-\left(\int_0^\infty\frac{x^{t+\alpha}t^\beta\;\log^{n}(t)}{\Gamma(t+\alpha+1)}dt\right)^2\\
&=\int_0^\infty\int_0^\infty \frac{x^{t+s+2\alpha}(ts)^\beta}{\Gamma(t+\alpha+1)\Gamma(s+\alpha+1)}\left[\log^{n-1}(t)\log^{n+1}(s)-\log^{n}(t)\log^{n}(s)\right]dsdt\\
&=\int_0^\infty\int_0^\infty \frac{x^{t+s+2\alpha}(ts)^\beta}{\Gamma(t+\alpha+1)\Gamma(s+\alpha+1)}\left[\log(t)\log(s)\right]^{n-1}\left[\log^{2}(t)-\log(t)\log(s)\right]dsdt\\
&=\frac{1}{2}\int_0^\infty\int_0^\infty \frac{x^{t+s+2\alpha}(ts)^\beta}{\Gamma(t+\alpha+1)\Gamma(s+\alpha+1)}\left[\log(t)\log(s)\right]^{n-1}\left[\log(t)-\log(s)\right]^2dsdt\geq0.
\end{split}
\end{equation}

It is important to mention here that there is another proof of inequality (\ref{EEE}). Namely, for all odd integers $n\geq1$ and real numbers $x> 0$ and $\alpha>-1,$ we apply the  Cauchy--Bunyakovskii inequality and find
\begin{equation}
\begin{split}
\left(\frac{\partial^{n}F_{x,\alpha}(\beta)}{\partial \beta^{n-1}}\right)^2&=\left(\int_0^\infty \frac{x^{t+\alpha}t^\beta\;\log^{n}(t)}{\Gamma(t+\alpha+1)}dt\right)^2\\
&=\left(\int_0^\infty \left[\frac{x^{t+\alpha}t^\beta\;\log^{n-1}(t)}{\Gamma(t+\alpha+1)}\right]^{\frac{1}{2}}\left[\frac{x^{t+\alpha}t^\beta\;\log^{n+1}(t)}{\Gamma(t+\alpha+1)}\right]^{\frac{1}{2}}dt\right)^2\\
&\leq\left[\int_0^\infty \frac{x^{t+\alpha}t^\beta\;\log^{n-1}(t)}{\Gamma(t+\alpha+1)}\right]\left[\int_0^\infty \frac{x^{t+\alpha}t^\beta\;\log^{n+1}(t)}{\Gamma(t+\alpha+1)}\right]\\
&=\left(\frac{\partial^{n-1}F_{x,\alpha}(\beta)}{\partial \beta^{n-1}}\right)\left(\frac{\partial^{n+1}F_{x,\alpha}(\beta)}{\partial \beta^{n+1}}\right).
\end{split}
\end{equation}
The proof of the Theorem \ref{TT3} is complete.
\end{proof}

\begin{corollary}For all odd integers $n\geq1$ and real numbers $x> 0$ and $\alpha>-1,$  the function $\beta\mapsto\Delta_n^{(x,\alpha)}(\beta)$ is convex on $(-1,\infty).$ Moreover, the following Schur--type functional inequality
\begin{equation}\label{SCHUR}
\begin{split}
(\beta_1-\beta_2)(\beta_1-\beta_3)\Delta_n^{(x,\alpha)}(\beta_1)&+(\beta_2-\beta_1)(\beta_2-\beta_3)\Delta_n^{(x,\alpha)}(\beta_2)\\&+(\beta_3-\beta_1)(\beta_3-\beta_2)\Delta_n^{(x,\alpha)}(\beta_2)\geq0,
\end{split}
\end{equation}
is valid for all $\beta_1,\beta_2,\beta_3>-1.$
\end{corollary}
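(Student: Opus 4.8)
The plan is to prove the two assertions separately, using the double-integral representation of $\Delta_n^{(x,\alpha)}$ obtained inside the proof of Theorem \ref{TT3} for the convexity claim, and then combining that convexity with the nonnegativity statement of Theorem \ref{TT3} for the Schur-type inequality.

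For the convexity of $\beta\mapsto\Delta_n^{(x,\alpha)}(\beta)$, I would start from the representation
$$\Delta_n^{(x,\alpha)}(\beta)=\frac{1}{2}\int_0^\infty\int_0^\infty \frac{x^{t+s+2\alpha}(ts)^\beta}{\Gamma(t+\alpha+1)\Gamma(s+\alpha+1)}\left[\log(t)\log(s)\right]^{n-1}\left[\log(t)-\log(s)\right]^2\,ds\,dt$$
and differentiate twice in $\beta$ under the integral sign. Since $\partial_\beta^2 (ts)^\beta=(ts)^\beta(\log t+\log s)^2$, the second derivative acquires the extra factor $(\log t+\log s)^2\geq0$. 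For odd $n$ the exponent $n-1$ is even, so $\left[\log(t)\log(s)\right]^{n-1}\geq0$; together with the square $\left[\log(t)-\log(s)\right]^2$ and the manifestly positive remaining kernel, the whole integrand is nonnegative, whence $\partial_\beta^2\Delta_n^{(x,\alpha)}(\beta)\geq0$ and $\Delta_n^{(x,\alpha)}$ is convex on $(-1,\infty)$. The only technical point here is the legitimacy of differentiating under the integral, which I would justify exactly as in the earlier parts, by dominated convergence on compact subintervals of $\beta$.

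For the Schur-type inequality, I would first note that its left-hand side is symmetric under permutations of $(\beta_1,\beta_2,\beta_3)$, so one may assume $\beta_1\geq\beta_2\geq\beta_3$. Setting $a=\beta_1-\beta_2\geq0$ and $b=\beta_2-\beta_3\geq0$, a short check shows $\beta_2=\tfrac{b}{a+b}\beta_1+\tfrac{a}{a+b}\beta_3$ is a convex combination of $\beta_1$ and $\beta_3$, so the convexity just proved yields $\Delta_n^{(x,\alpha)}(\beta_2)\leq\tfrac{b}{a+b}\Delta_n^{(x,\alpha)}(\beta_1)+\tfrac{a}{a+b}\Delta_n^{(x,\alpha)}(\beta_3)$. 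Inserting this bound into the Schur expression (whose middle coefficient $-ab$ is nonpositive, so the inequality direction is preserved) reduces the left-hand side to a lower bound of the form $\tfrac{a^2(a+2b)}{a+b}\Delta_n^{(x,\alpha)}(\beta_1)+\tfrac{b^2(2a+b)}{a+b}\Delta_n^{(x,\alpha)}(\beta_3)$, in which both coefficients are nonnegative. Invoking $\Delta_n^{(x,\alpha)}(\beta)\geq0$ from Theorem \ref{TT3} then forces the whole expression to be nonnegative, which is the desired inequality.

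I expect the main obstacle to be the second part, where the key conceptual point is that convexity by itself does not suffice: a negative constant is convex yet violates the Schur inequality, so the argument genuinely needs both the convexity established in the first part and the nonnegativity of $\Delta_n^{(x,\alpha)}$ furnished by Theorem \ref{TT3}. Recognizing that these two ingredients must be used jointly, and arranging the convex-combination substitution so that the negative middle coefficient is neutralized, is the crux of the proof; the remaining algebra is routine.
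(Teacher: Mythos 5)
Your proof is correct, and it departs from the paper's in both halves. For the convexity claim the paper sets $n=2k-1$, computes
$\partial_\beta^2\Delta_{2k-1}^{(x,\alpha)}(\beta)=\bigl(\partial_\beta^{2k-2}F_{x,\alpha}(\beta)\bigr)\bigl(\partial_\beta^{2k+2}F_{x,\alpha}(\beta)\bigr)-\bigl(\partial_\beta^{2k}F_{x,\alpha}(\beta)\bigr)^2$
and applies the Cauchy--Bunyakovskii inequality, whereas you differentiate the symmetrized double-integral representation of $\Delta_n^{(x,\alpha)}$ from the proof of Theorem \ref{TT3} twice under the integral sign and read off nonnegativity from the extra factor $(\log t+\log s)^2$ together with the even power $[\log t\log s]^{n-1}$; these are two faces of the same symmetrization/Cauchy--Schwarz device, and your version is no less rigorous than the paper's (both differentiate under the integral without detailed justification). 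The more substantive difference is in the Schur-type inequality: the paper simply invokes Wright's theorem that a nonnegative convex function satisfies the Schur inequality, citing \cite{WR}, while you give a self-contained proof of that fact by ordering $\beta_1\geq\beta_2\geq\beta_3$, writing $\beta_2=\tfrac{b}{a+b}\beta_1+\tfrac{a}{a+b}\beta_3$ with $a=\beta_1-\beta_2$, $b=\beta_2-\beta_3$, and using convexity to absorb the negative middle coefficient $-ab$; your resulting lower bound $\tfrac{a^2(a+2b)}{a+b}\Delta_n^{(x,\alpha)}(\beta_1)+\tfrac{b^2(2a+b)}{a+b}\Delta_n^{(x,\alpha)}(\beta_3)$ checks out (the degenerate case $a+b=0$ being trivial), and your observation that nonnegativity of $\Delta_n^{(x,\alpha)}$ from Theorem \ref{TT3} is genuinely needed alongside convexity is precisely the hypothesis of Wright's lemma. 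You buy self-containedness at the cost of a little algebra; the paper buys brevity at the cost of an external reference. Note also that you have tacitly corrected the typo in \eqref{SCHUR}, whose third term should carry $\Delta_n^{(x,\alpha)}(\beta_3)$ rather than $\Delta_n^{(x,\alpha)}(\beta_2)$.
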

\begin{proof}We set $n=2k-1$ with $k\geq1.$  Differentiation gives for $\beta>-1$
\begin{equation*}
\frac{\partial^2\Delta_{2k-1}^{(x,\alpha)}(\beta)}{\partial\beta^2}=\left(\frac{\partial^{2k-2}F_{x,\alpha}(\beta)}{\partial \beta^{2k-2}}\right)\left(\frac{\partial^{n+1}F_{x,\alpha}(\beta)}{\partial \beta^{2k+2}}\right)-\left(\frac{\partial^{2k}F_{x,\alpha}(\beta)}{\partial \beta^{2k}}\right)^2.
\end{equation*}
So, applying the Cauchy--Bunyakovskii inequality yields
$$\frac{\partial^2\Delta_{2k-1}^{(x,\alpha)}(\beta)}{\partial\beta^2}\geq0.$$
Since $\Delta_{n}^{(x,\alpha)}(\beta)$ is nonnegative and convex on $(-1,\infty),$ we conclude that the Schur--type functional inequality (\ref{SCHUR}) holds true (see \cite{WR}).
\end{proof}

\section{Monotonicity Properties and functional inequalities for the incomplete Volterra function}

\begin{theorem} \label{T6}Let $\alpha>z^*,\;0<x<1$ and $s>0.$ Then the function $\beta\mapsto G_\beta(x,\alpha,s)$
is log--concave on $(-1,\infty),$ where $z^*\simeq 1.461632144...$ is the abscissa of the minimum of the $\Gamma$ function. In particular, the Tur\'an type inequality
\begin{equation}
\left(G_{\beta+1}(x,\alpha,s)\right)^2-G_{\beta+1}(x,\alpha,s)G_{\beta+2}(x,\alpha,s)\geq0,
\end{equation}
holds for all $s>0,\;0<x<1$ and $\alpha>z^*.$
\end{theorem}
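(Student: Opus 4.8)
The plan is to reduce the statement to a variance comparison. Since the factors $x^{\alpha}$ and $\Gamma(\beta+1)$ cancel between numerator and denominator, I would write $G_\beta(x,\alpha,s)=N(\beta)/D(\beta)$ with $N(\beta)=\int_s^\infty t^\beta w(t)\,dt$, $D(\beta)=\int_0^\infty t^\beta w(t)\,dt$ and $w(t)=x^{t}/\Gamma(t+\alpha+1)>0$. Because $0<x<1$, all these integrals (and those obtained by differentiating in $\beta$) converge, so I may differentiate under the integral sign. Writing $\langle g\rangle_N$ and $\langle g\rangle_D$ for the averages of $g$ against the normalized measures $t^\beta w(t)\,\mathbf 1_{[s,\infty)}$ and $t^\beta w(t)$ on $(0,\infty)$, a direct computation gives $(\log N)''=\mathrm{Var}_N(\log t)$ and $(\log D)''=\mathrm{Var}_D(\log t)$, hence
$$\frac{d^2}{d\beta^2}\log G_\beta(x,\alpha,s)=\mathrm{Var}_N(\log t)-\mathrm{Var}_D(\log t).$$
Thus log-concavity in $\beta$ is equivalent to the assertion that the variance of $\log t$ computed on the tail $[s,\infty)$ never exceeds the variance computed on all of $(0,\infty)$.

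To exploit log-concavity I would pass to the variable $u=\log t$. The common unnormalized measure becomes $W(u)\,du$ with $W(u)=e^{(\beta+1)u}\,x^{e^{u}}/\Gamma(e^{u}+\alpha+1)$, and the two measures above are exactly the restriction of $W$ to $[\log s,\infty)$ and to all of $\mathbb R$. Setting $U=\log T$ for a random variable $T$ with density proportional to $t^\beta w(t)$, the claim reads $\mathrm{Var}(U\mid U\ge \log s)\le \mathrm{Var}(U)$. The main structural input is that $W$ is log-concave: with $v=e^{u}$ one computes
$$\frac{d^2}{du^2}\log W(u)=v\bigl[\log x-\psi(v+\alpha+1)-v\,\psi'(v+\alpha+1)\bigr],$$
and here the hypotheses enter cleanly, since $\log x<0$ and $\psi'>0$, while $\psi(v+\alpha+1)>0$ because $v+\alpha+1>\alpha+1>z^{*}$ and $z^{*}$ is precisely the unique positive zero of $\psi$ (equivalently, the abscissa of the minimum of $\Gamma$). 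Hence the bracket is negative and $U$ has a log-concave density on $\mathbb R$.

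It then remains to prove the general fact that, for a random variable $U$ with log-concave density, the upper-truncated variance $V(c):=\mathrm{Var}(U\mid U\ge c)$ is non-increasing in $c$; applying this with $c=\log s$ and letting $c\to-\infty$, where $V(c)\to\mathrm{Var}(U)$, yields the desired inequality. I would prove this lemma by differentiating: with hazard rate $r(c)=W(c)/\int_c^\infty W$ and mean residual life $e(c)=\langle U-c\mid U\ge c\rangle$, one obtains the identity $V'(c)=r(c)\bigl(V(c)-e(c)^2\bigr)$. Writing $R=(U-c)\mid(U\ge c)\ge 0$, this is $V'(c)=r(c)\bigl(\mathbb E[R^2]-2\,\mathbb E[R]^2\bigr)$; since truncating and shifting a log-concave density leaves it log-concave, $R$ has a log-concave density on $[0,\infty)$, and for such variables the normalized second moment obeys $\mathbb E[R^2]\le 2\,\mathbb E[R]^2$ (the exponential law being the extremal case). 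Therefore $V'(c)\le 0$.

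Combining the three steps gives $(\log G_\beta)''\le 0$, i.e. $\beta\mapsto G_\beta(x,\alpha,s)$ is log-concave; specializing the defining log-concavity inequality to the points $\beta$, $\beta+2$ and the midpoint $\beta+1$ produces the stated Tur\'an type inequality $G_{\beta+1}^2\ge G_{\beta}\,G_{\beta+2}$. I expect the residual-variance lemma — in particular the moment bound $\mathbb E[R^2]\le 2\,\mathbb E[R]^2$ for nonnegative log-concave laws — to be the real crux of the argument; a secondary technical point is the routine but necessary justification of differentiation under the integral sign, which is guaranteed by the fast decay of $W$ at both ends coming from $0<x<1$ and $\beta>-1$.
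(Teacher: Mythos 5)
Your argument is correct, but it follows a genuinely different route from the paper's. You rewrite $(\log G_\beta)''$ as $\mathrm{Var}_N(\log t)-\mathrm{Var}_D(\log t)$ and reduce everything to the probabilistic fact that lower truncation cannot increase the variance of a random variable with log-concave density; the hypotheses enter exactly once, to make the density of $U=\log T$ log-concave ($\log x<0$ from $0<x<1$, and $\psi(v+\alpha+1)>0$ from $\alpha>z^*$ — in fact $\alpha\ge z^*-1$ already suffices here). The paper works with the same second logarithmic derivative, but multiplied through as $U_\beta=\mu^2(x,\beta,\alpha,s)\,(\log G_\beta)''$, and determines its sign by a three-level differentiation in $s$ (the functions $U_\beta,V_\beta,W_\beta$), counting sign changes of each level and pinning down boundary values with l'Hospital's rule; it uses $0<x<1$ and the position of $z^*$ at the innermost level, in the monotonicity of the kernel $\omega_\beta$, and it needs $K(\beta)=(\log H)''\ge0$ from Theorem 1(e), which your limit $c\to-\infty$ replaces. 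Your approach buys a cleaner structural explanation and sidesteps the delicate sign-chasing (which in the paper is somewhat garbled — the roles of $s>z^*$ versus $\alpha>z^*$ are switched in places); the price is that the crux becomes the residual-variance lemma, which you assert rather than prove. It is a true and standard fact, but you should close it out: a log-concave density on $[0,\infty)$ is IFR, IFR implies the mean residual life $e(\cdot)$ is decreasing, and then
\begin{equation*}
\mathbb{E}[R^2]=2\int_0^\infty\!\!\int_u^\infty \bar G(y)\,dy\,du=2\int_0^\infty e(u)\,\bar G(u)\,du\le 2\,e(0)\int_0^\infty \bar G(u)\,du=2\,\mathbb{E}[R]^2,
\end{equation*}
which together with your identity $V'(c)=r(c)\bigl(\mathbb{E}[R^2]-2\mathbb{E}[R]^2\bigr)$ finishes the lemma. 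Finally, note that the Tur\'an inequality you obtain, $G_{\beta+1}^2\ge G_\beta G_{\beta+2}$, is the correct midpoint consequence of log-concavity; the display in the theorem statement ($G_{\beta+1}^2-G_{\beta+1}G_{\beta+2}\ge0$) appears to contain a typo.
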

\begin{proof}
We show that
\begin{equation}
\frac{\partial^2}{\partial\beta^2}\log G_{\beta+1}(x,\alpha,s)\leq0,
\end{equation}
for $\alpha,\beta>-1,\;0<x<1$ and $s>z^*.$ Let $K(\beta)=\frac{\partial^2\log H(\beta)}{\partial\beta^2},$ with $H(\beta)$ is defined in Theorem \ref{T1}. Then we have
\begin{equation}\label{MMMMm}
\begin{split}
\mu^2(x,\beta,\alpha,s)\frac{\partial^2}{\partial\beta^2}\log G_\beta(x,\alpha,s)&=\int_s^\infty\frac{x^{t+\alpha}t^\beta}{\Gamma(t+\alpha+1)}dt\int_s^\infty\frac{x^{t+\alpha}t^\beta\;\log^2(t)}{\Gamma(t+\alpha+1)}dt\\
&-\left(\int_s^\infty\frac{x^{t+\alpha}t^\beta\;\log(t)}{\Gamma(t+\alpha+1)}dt\right)^2-K(\beta)\left(\int_s^\infty\frac{x^{t+\alpha}t^\beta}{\Gamma(t+\alpha+1)}dt\right)^2\\
&=U_\beta(x,\alpha,s).
\end{split}
\end{equation}
We derive
\begin{equation}\label{MALL}
\begin{split}
\frac{\Gamma(s+\alpha+1)}{x^{s+\alpha}s^\beta}\frac{\partial}{\partial s}U_\beta(x,\alpha,s)&=-\log^2(s)\int_s^\infty\frac{x^{t+\alpha}t^\beta}{\Gamma(t+\alpha+1)}dt-\int_s^\infty\frac{x^{t+\alpha}t^\beta\;\log^2(t)}{\Gamma(t+\alpha+1)}dt\\
&+2\log(s)\int_s^\infty\frac{x^{t+\alpha}t^\beta\;\log(t)}{\Gamma(t+\alpha+1)}dt+2K(\beta)\int_s^\infty\frac{x^{t+\alpha}t^\beta}{\Gamma(t+\alpha+1)}dt.
\end{split}
\end{equation}
We denote the expression on the right--hand side of (\ref{MALL}) by $V_\beta(x,\alpha,s).$ Then we have
\begin{equation}
\begin{split}
\frac{\partial}{2\partial s}V_\beta(x,\alpha,s)&=\frac{1}{s}\int_s^\infty\frac{x^{t+\alpha}t^\beta\left(\log(t)-\log(s)\right)}{\Gamma(t+\alpha+1)}dt-K(\beta)\frac{x^{s+\alpha}s^\beta}{\Gamma(s+\alpha+1)}\\
&=x^\alpha s^\beta\left(\int_1^\infty\frac{x^{st}t^\beta\log(t)}{\Gamma(st+\alpha+1)}dt-\frac{K(\beta)x^s}{\Gamma(s+\alpha+1)}\right)\\
&=\frac{x^{\alpha+s} s^\beta}{\Gamma(s+\alpha+1)} W_\beta(x,\alpha,s),
\end{split}
\end{equation}
where $ W_\beta(x,s)$ is defined by
\begin{equation}\label{yyy} W_\beta(x,\alpha,s)=\int_1^\infty t^\beta\log(t)\omega_\beta(x,\alpha,s)dt-K(\beta),\;\textrm{with}\;\;\omega_\beta(x,\alpha,s,t)=\frac{x^{s(t-1)}\Gamma(s+\alpha+1)}{\Gamma(st+\alpha+1)}.\end{equation}
Thus, we have
\begin{equation}\label{mehrezm}
\begin{split}
\frac{\partial}{\partial s}\omega_\beta(x,\alpha,s)&=\frac{(t-1)\log(x)x^{s(t-1)}\Gamma(s+\alpha+1)}{\Gamma(st+\alpha+1)}\\
&+\frac{x^{s(t-1)}\Gamma(s+\alpha+1)\left[\psi(s+\alpha+1)-t\psi(st+\alpha+1)\right]}{\Gamma(st+\alpha+1)}.
\end{split}
\end{equation}
Since the function $\psi$ is increasing on $(0,\infty),$ we obtain
\begin{equation}
\begin{split}
\frac{\partial}{\partial s}\omega_\beta(x,\alpha,s)&\leq\frac{(t-1)\log(x)x^{s(t-1)}\Gamma(s+\alpha+1)}{\Gamma(st+\alpha+1)}\\
&+\frac{x^{s(t-1)}\Gamma(s+\alpha+1)\left[\psi(s+\alpha+1)-\psi(st+\alpha+1)\right]}{\Gamma(st+\alpha+1)}\\
&\leq0,
\end{split}
\end{equation}
for all $t>1$ and $0<x<1.$ Thus implies that $\frac{\partial}{\partial s}W_\beta(x,\alpha,s)<0.$
Moreover, from (\ref{yyy}) we derive
\begin{equation}\label{bnbb}
\lim_{s\rightarrow0}W_\beta(x,\alpha,s)=\infty.
\end{equation}
On the other hand, by using the fact that the function $s\mapsto\Gamma(s)$ is increasing on $(z^*,\infty),$ we find that
\begin{equation*}
W_\beta(x,\alpha,s)\leq\int_1^\infty x^{s(t-1)}t^\beta\log(t)dt-K(\beta).
\end{equation*}
Then,
\begin{equation}\label{bnb}
\lim_{s\rightarrow\infty}W_\beta(x,\alpha,s)\leq-K(\beta).
\end{equation}
Furthermore, since the function $K(\beta)\geq0$ for all $\beta>-1,$ by means of Theorem \ref{T1},  and in view of (\ref{bnbb}) and (\ref{bnb}) we deduce that there exists a positive number $s_1$ such that $W_\beta$ is positive on $(0,x_1)$ and negative on $(x_1,\infty).$ This implies that the function $V_\beta$ is increasing on $(0,x_1)$ and decreasing on $(x_1,\infty).$ Furthermore, we have
\begin{equation}\label{MMM}
\begin{split}
\lim_{s\rightarrow\infty}V_\beta(x,\alpha,s)&=\lim_{s\rightarrow\infty}\Big(-\log^2(s)\int_s^\infty\frac{x^{t+\alpha}t^\beta}{\Gamma(t+\alpha+1)}dt-\int_s^\infty\frac{x^{t+\alpha}t^\beta\;\log^2(t)}{\Gamma(t+\alpha+1)}dt\\
&+2\log(s)\int_s^\infty\frac{x^{t+\alpha}t^\beta\;\log(t)}{\Gamma(t+\alpha+1)}dt+2K(\beta)\int_s^\infty\frac{x^{t+\alpha}t^\beta}{\Gamma(t+\alpha+1)}dt\Big)\\
&=\lim_{s\rightarrow\infty}\left(-\log^2(s)\int_s^\infty\frac{x^{t+\alpha}t^\beta}{\Gamma(t+\alpha+1)}dt+2\log(s)\int_s^\infty\frac{x^{t+\alpha}t^\beta\;\log(t)}{\Gamma(t+\alpha+1)}dt\right)\\
&=\lim_{s\rightarrow\infty}(I_1(s)+I_2(s)),\;(say.)
\end{split}
\end{equation}
Hospital's rule leads to
\begin{equation}\label{MM}
\begin{split}
\lim_{s\rightarrow\infty}I_1(s)&=\lim_{s\rightarrow\infty}\left(\int_0^s\frac{x^{t+\alpha}t^\beta}{\Gamma(t+\alpha+1)}dt-\int_0^\infty\frac{x^{t+\alpha}t^\beta}{\Gamma(t+\alpha+1)}dt\Big/\log^{-2}(s)\right)\\
&=-\lim_{s\rightarrow\infty}\frac{x^\alpha s^{\beta+1}\log^3(s)e^{s\log(x)}}{2\Gamma(s+\alpha+1)}\\
&=0,
\end{split}
\end{equation}
and
\begin{equation}\label{MMMMmm}
\begin{split}
\lim_{s\rightarrow\infty}I_2(s)&=\lim_{s\rightarrow\infty}\left(\int_0^\infty\frac{x^{t+\alpha}t^\beta\log(t)}{\Gamma(t+\alpha+1)}dt-\int_0^s\frac{x^{t+\alpha}t^\beta\log(t)}{\Gamma(t+\alpha+1)}dt\Big/\log^{-1}(s)\right)\\
&=\lim_{s\rightarrow\infty}\frac{x^\alpha s^{\beta+1}\log^3(s)e^{s\log(x)}}{\Gamma(s+\alpha+1)}\\
&=0.
\end{split}
\end{equation}
From (\ref{MMM}) (\ref{MM})and (\ref{MMMMmm}) we find that $$\lim_{s\rightarrow\infty}V_\beta(x,\alpha,s)=0.$$
On the other hand, we have
$$\lim_{s\rightarrow0}V_\beta(x,\alpha,s)=-\infty.$$
So, we obtain that there exists a positive number $s_2$ such that $V_\beta$ is negative on $(0,s_2)$ and positive on $(s_2,\infty).$ So, by (\ref{MALL}) we deduce that the function $U_\beta$ is decreasing on $(0,s_2)$ and increasing on $(s_2,\infty).$ Moreover,
$$U_\beta(x,\alpha,0)=\lim_{s\rightarrow\infty}U_\beta(x,\alpha,s)=0.$$
Then $U_\beta(x,\alpha,s)\leq 0$ and consequently, the function $\beta\mapsto G_\beta(x,\alpha,s)$ is log-concave on $(-1,\infty)$ for all $s>0,\alpha>z^*$ and $0<x<1.$ 
The proof of Theorem \ref{T6} is complete.
\end{proof}

Now we consider some mean--value inequalities. For more information on power means and their applications see e.g. \cite{MBV}, \cite{Sit2}.

The power mean of order $t$ is defined by
\begin{equation*}
\begin{split}
M_t(x_1,x_2;\lambda)&=\left(\lambda x_1^t+(1-\lambda)x_2^t\right)^{\frac{1}{t}}\;(t\neq0,\;0<\lambda<1),\\
M_0(x_1,x_2;\lambda)&=x_1^\lambda x_2^{1-\lambda}\\
M_{-\infty}(x_1,x_2;\lambda)&=\min(x_1,x_2),\;\;
M_{\infty}(x_1,x_2;\lambda)=\max(x_1,x_2).
\end{split}
\end{equation*}
\begin{corollary}Let $\beta_1,\beta_2>-1,\;\beta_1\neq\beta_2$ and $\lambda\in(0,1).$ Then, the following inequality
\begin{equation}\label{SARRA1}
M_r\left(G_{\beta_1}(x,\alpha,s),G_{\beta_2}(x,\alpha,s);\lambda\right)\leq G_{\lambda\beta_1+(1-\lambda)\beta_2}(x,\alpha,s)
\end{equation}
is valid for all $x>0,s\geq0,\alpha>-1$ if and only if $r\leq0.$
\end{corollary}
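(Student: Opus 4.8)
The plan is to isolate two ingredients and then combine them: the log--concavity of $\beta\mapsto G_\beta(x,\alpha,s)$ furnished by Theorem \ref{T6}, which pins down the threshold order $r=0$, and the elementary monotonicity of power means in their order, namely $M_r\le M_{r'}$ whenever $r\le r'$. Writing $g(\beta)=G_\beta(x,\alpha,s)$ for fixed $x,s,\alpha$, the assertion is that the pointwise inequality (\ref{SARRA1}) holds simultaneously for every admissible triple $(x,s,\alpha)$ exactly when $r\le0$, and I would prove the two implications separately.

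\textbf{Sufficiency.} Suppose $r\le0$. The case $s=0$ is trivial, since then $g\equiv1$ and both sides of (\ref{SARRA1}) equal $1$. For $s>0$, begin with $r=0$: by definition $M_0(g(\beta_1),g(\beta_2);\lambda)=g(\beta_1)^\lambda g(\beta_2)^{1-\lambda}$, and the log--concavity of $\beta\mapsto g(\beta)$ from Theorem \ref{T6} gives
\[
g(\beta_1)^\lambda g(\beta_2)^{1-\lambda}\le g(\lambda\beta_1+(1-\lambda)\beta_2),
\]
which is precisely (\ref{SARRA1}) at $r=0$. For $r<0$ I would invoke the standard monotonicity $r\mapsto M_r(g(\beta_1),g(\beta_2);\lambda)$ nondecreasing, so that $M_r\le M_0$, and chaining with the previous display yields (\ref{SARRA1}).

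\textbf{Necessity.} Here I would argue by contraposition: assuming $r>0$, I exhibit parameters violating (\ref{SARRA1}). Fix $0<x<1$ and $\alpha>-1$ and let $s\to\infty$. The decisive input is a Laplace/Watson-type concentration: since $t\mapsto x^{t}t^\beta/\Gamma(t+\alpha+1)$ is eventually rapidly decreasing, the integral $\mu(x,\beta,\alpha,s)$ is dominated by $t$ near $s$, so that upon forming ratios the common factors cancel and
\[
\frac{G_{\beta_1}(x,\alpha,s)}{G_{\beta_2}(x,\alpha,s)}\sim c\,s^{\beta_1-\beta_2}\qquad(s\to\infty),\qquad c=\frac{\mu(x,\beta_2,\alpha)}{\mu(x,\beta_1,\alpha)}>0.
\]
Taking without loss of generality $\beta_1>\beta_2$, we get $G_{\beta_1}/G_{\beta_2}\to\infty$, so for $r>0$ the larger term dominates the power mean, $M_r(G_{\beta_1},G_{\beta_2};\lambda)\sim\lambda^{1/r}G_{\beta_1}$, while the same concentration gives $G_{\beta_1}/G_{\lambda\beta_1+(1-\lambda)\beta_2}\sim c'\,s^{(1-\lambda)(\beta_1-\beta_2)}\to\infty$. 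Hence the ratio of the left to the right side of (\ref{SARRA1}) tends to $+\infty$, the inequality fails for all large $s$, and therefore $r\le0$.

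\textbf{Main obstacle.} The sufficiency is routine once Theorem \ref{T6} is in hand; the delicate point is the necessity, specifically making the $s\to\infty$ concentration rigorous enough to extract the clean power $s^{\beta_1-\beta_2}$ in the ratio $G_{\beta_1}/G_{\beta_2}$. I would obtain this from the estimate $\mu(x,\beta,\alpha,s)\sim x^{s+\alpha}s^\beta/\big(\Gamma(s+\alpha+1)\,\psi(s+\alpha+1)\big)$, checking that the $\beta$--dependence of the subleading factor $\psi(s+\alpha+1)-\log x-\beta/s$ is negligible. An equivalent and perhaps cleaner route is a local second--order expansion as $\beta_2\to\beta_1$, reducing (\ref{SARRA1}) to the differential inequality $(\log g)''+r\big((\log g)'\big)^2\le0$ and then to the statement that $\inf\big(-(\log g)''/((\log g)')^2\big)=0$ over the parameters, which is again the same asymptotic degeneration in $s$.
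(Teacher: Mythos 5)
Your proof is correct and takes essentially the same route as the paper's: the case $r=0$ follows from the log--concavity of $\beta\mapsto G_\beta(x,\alpha,s)$ in Theorem \ref{T6} and $r<0$ from the monotonicity of power means in their order, while necessity comes from the blow--up of the ratio $G_{\beta_1}(x,\alpha,s)/G_{\lambda\beta_1+(1-\lambda)\beta_2}(x,\alpha,s)\asymp s^{(1-\lambda)(\beta_1-\beta_2)}$ as $s\to\infty$, which the paper derives by l'Hospital's rule where you use an equivalent Watson--type concentration argument (your constant $c$ omits a harmless factor $\Gamma(\beta_2+1)/\Gamma(\beta_1+1)$). The only caveat, shared with the paper's own proof, is that Theorem \ref{T6} establishes log--concavity only for $0<x<1$, $s>0$ and $\alpha>z^*$, so the sufficiency claim ``for all $x>0,\ s\geq0,\ \alpha>-1$'' rests on more than that theorem literally provides.
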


\begin{proof}By using the fact that the function $G_\beta$ is log--concave on $(-1,\infty),$ we conclude that the inequality (\ref{SARRA1}) with $r=0$  holds for all $x>0,\alpha>-1.$ We suppose that there exists a real number $r>0$ such that the inequality (\ref{SARRA1}) holds true. This implies that
\begin{equation}\label{ZAR}
\lambda\left[\frac{G_{\beta_1}(x,\alpha,s)}{G_{\lambda\beta_1+(1-\lambda)\beta_2}(x,\alpha,s)}\right]^r+(1-\lambda)\left[\frac{G_{\beta_2}(x,\alpha,s)}{G_{\lambda\beta_1+(1-\lambda)\beta_2}(x,\alpha,s)}\right]^r\leq1.
\end{equation}
From the l'Hospital's rule we obtain
\begin{equation}
\begin{split}
\lim_{s\rightarrow\infty}\frac{G_{\beta_1}(x,\alpha,s)}{G_{\lambda\beta_1+(1-\lambda)\beta_2}(x,\alpha,s)}&=\lim_{s\rightarrow\infty}\frac{\mu(x,\lambda\beta_1+(1-\lambda)\beta_2,\alpha)\Gamma(\lambda\beta_1+(1-\lambda)\beta_2+1)s^{(1-\lambda)(\beta_1-\beta_2)}}{\mu(x,\beta_1,\alpha)\Gamma(\beta_1+1)}\\
&= \left\{ \begin{array}{ll}
\infty, & \textrm{if $\beta_1>\beta_2$}\\
0, & \textrm{if $\beta_1<\beta_2$}
\end{array} \right.
\end{split}
\end{equation}
and
\begin{equation}
\begin{split}
\lim_{s\rightarrow0}\frac{G^*_{\beta_1}(x,\alpha,s)}{G^*_{\lambda\beta_1+(1-\lambda)\beta_2}(x,\alpha,s)}&=\lim_{s\rightarrow0}\frac{\mu(x,\lambda\beta_1+(1-\lambda)\beta_2,\alpha)\Gamma(\lambda\beta_1+(1-\lambda)\beta_2+1)s^{\lambda(\beta_2-\beta_1)}}{\mu(x,\beta_2,\alpha)\Gamma(\beta_2+1)}\\
&= \left\{ \begin{array}{ll}
0, & \textrm{if $\beta_1>\beta_2$}\\
\infty, & \textrm{if $\beta_1<\beta_2$}
\end{array} \right.
\end{split}
\end{equation}
This implies that the left--hand side of the inequality (\ref{ZAR}) tends to $\infty$ if $s$ tends to $0.$ It is a  contradiction, so the inequality (\ref{SARRA1}) is valid for all  $x>0, s\geq0, \alpha>-1$ and $r\leq0.$
\end{proof}

\begin{theorem}\label{T7} Suppose that conditions of Theorem \ref{T6} are satisfied. Then the function $G^*_\beta$ is log--concave on $(-1,\infty).$ In particular, the Tur\'an type inequality
\begin{equation}
\left(G_{\beta+1}^*(x,\alpha,s)\right)^2-G_\beta^*(x,\alpha,s)G_{\beta+2}^*(x,\alpha,s)\geq0,
\end{equation}
holds for all $s>0,\;0<x<1$ and $\alpha>z^*.$
\end{theorem}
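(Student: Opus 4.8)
The plan is to mirror the proof of Theorem \ref{T6}, now working with the \emph{lower} incomplete integral. Since the factor $\Gamma(\beta+1)$ cancels in the quotient, I would write $G^*_\beta(x,\alpha,s)=P(\beta,s)/H(\beta)$, where $P(\beta,s)=\int_0^s \frac{x^{t+\alpha}t^\beta}{\Gamma(t+\alpha+1)}\,dt$ and $H(\beta)=\int_0^\infty \frac{x^{t+\alpha}t^\beta}{\Gamma(t+\alpha+1)}\,dt$. Differentiating twice in $\beta$ and setting $K(\beta)=\partial_\beta^2\log H(\beta)$, which is $\ge 0$ by the log-convexity of $H$ in Theorem \ref{T1}(e), one obtains $P^2\,\partial_\beta^2\log G^*_\beta=U^*_\beta$ with $U^*_\beta=P\,\partial_\beta^2P-(\partial_\beta P)^2-K(\beta)P^2$, so it suffices to prove $U^*_\beta\le 0$ for $s>0$. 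The two boundary values are immediate: $U^*_\beta(x,\alpha,0)=0$, and $\lim_{s\to\infty}U^*_\beta=H\,\partial_\beta^2H-(\partial_\beta H)^2-K H^2=0$ by the very definition of $K$.

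Next I would differentiate in $s$. Exactly as in (\ref{MALL}) one finds $\partial_s U^*_\beta = \frac{x^{s+\alpha}s^\beta}{\Gamma(s+\alpha+1)}\,V^*_\beta$ with
$$V^*_\beta(x,\alpha,s)=\int_0^s\frac{x^{t+\alpha}t^\beta(\log t-\log s)^2}{\Gamma(t+\alpha+1)}\,dt-2K(\beta)\int_0^s\frac{x^{t+\alpha}t^\beta}{\Gamma(t+\alpha+1)}\,dt,$$
where $V^*_\beta(x,\alpha,0)=0$, while $\lim_{s\to\infty}V^*_\beta=+\infty$ because the $(\log s)^2\int_0^s(\cdots)$ piece dominates (this is the sign difference from Theorem \ref{T6}, where the corresponding limit was finite). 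A further differentiation, after the substitution $t=su$, gives $\partial_s V^*_\beta=2\frac{x^{s+\alpha}s^\beta}{\Gamma(s+\alpha+1)}\,W^*_\beta$ with
$$W^*_\beta(x,\alpha,s)=-\int_0^1 u^\beta\log u\,\omega^*_\beta(x,\alpha,s,u)\,du-K(\beta),\qquad \omega^*_\beta=\frac{x^{s(u-1)}\Gamma(s+\alpha+1)}{\Gamma(su+\alpha+1)}.$$
Because $0<u<1$ and $0<x<1$ give $(u-1)\log x>0$, and because $\alpha>z^*$ forces $\psi(su+\alpha+1)>0$ together with $\psi(s+\alpha+1)\ge\psi(su+\alpha+1)\ge u\psi(su+\alpha+1)$, I get $\partial_s\log\omega^*_\beta=(u-1)\log x+\psi(s+\alpha+1)-u\psi(su+\alpha+1)>0$; hence $\omega^*_\beta$, and therefore $W^*_\beta$ since $-\log u>0$, is increasing in $s$, with $\lim_{s\to\infty}W^*_\beta=+\infty$.

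The sign bookkeeping is the delicate part. The monotone function $W^*_\beta$ can change sign at most once, from $-$ to $+$. I would rule out the alternative $W^*_\beta\ge 0$ on all of $(0,\infty)$ by a boundary-value contradiction: it would make $V^*_\beta$ strictly increasing from $V^*_\beta(x,\alpha,0)=0$, hence positive, which would make $U^*_\beta$ strictly increasing from $U^*_\beta(x,\alpha,0)=0$, contradicting $\lim_{s\to\infty}U^*_\beta=0$. Consequently $W^*_\beta$ is negative on some $(0,s_1)$ and positive on $(s_1,\infty)$, so $V^*_\beta$ decreases then increases; since $V^*_\beta(x,\alpha,0)=0$ and $\lim_{s\to\infty}V^*_\beta=+\infty$, it is negative on some $(0,s_2)$ and positive on $(s_2,\infty)$. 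Therefore $U^*_\beta$ decreases on $(0,s_2)$ and increases on $(s_2,\infty)$, and the equal boundary values $U^*_\beta(x,\alpha,0)=\lim_{s\to\infty}U^*_\beta=0$ force $U^*_\beta\le 0$ throughout. This gives $\partial_\beta^2\log G^*_\beta\le 0$, i.e.\ log-concavity, and the Tur\'an inequality then follows from the geometric-mean estimate $G^*_{\beta+1}\ge\sqrt{G^*_\beta\,G^*_{\beta+2}}$. The main obstacle is exactly this nested sign analysis: pinning down that $W^*_\beta$ starts out negative (equivalently $K(\beta)>(\beta+1)^{-2}$, which the contradiction argument extracts for free rather than requiring a separate estimate of $K$) and verifying the monotonicity of $\omega^*_\beta$, where the hypothesis $\alpha>z^*$ is used to keep the digamma terms positive.
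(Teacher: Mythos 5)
Your proposal follows the paper's own proof essentially step for step: the same cascade $U^*_\beta \to V^*_\beta \to W^*_\beta$ with the kernel $\omega^*_\beta$, the same use of $\alpha>z^*$ and $0<x<1$ to make $\omega^*_\beta$ increasing in $s$, and the same boundary-value contradiction followed by the nested sign analysis forcing $U^*_\beta\le 0$. The only cosmetic differences are that you write $V^*_\beta$ as a single integral of $(\log t-\log s)^2$ (which makes its sign and its limits more transparent than the paper's l'Hospital computation) and that you phrase the contradiction directly as ``$W^*_\beta\ge0$ everywhere is impossible'' rather than first computing $\lim_{s\to0}W^*_\beta=\tfrac{1}{(\beta+1)^2}-K(\beta)$.
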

\begin{proof} We prove that
\begin{equation}\label{SIT5}
\frac{\partial^2}{\partial\beta^2}\log G_\beta^*(x,\alpha,s)\leq0,
\end{equation}
for $\alpha,\beta>-1,\;0<x<1$ and $s>z^*.$ Thus,
\begin{equation}\label{MMMM}
\begin{split}
\mu^2(x,\beta,\alpha)\frac{\partial^2}{\partial\beta^2}\log G_\beta^*(x,\alpha,s)&=\int_0^s\frac{x^{t+\alpha}t^\beta}{\Gamma(t+\alpha+1)}dt\int_0^s\frac{x^{t+\alpha}t^\beta\;\log^2(t)}{\Gamma(t+\alpha+1)}dt\\
&-\left(\int_0^s\frac{x^{t+\alpha}t^\beta\;\log(t)}{\Gamma(t+\alpha+1)}dt\right)^2-K(\beta)\left(\int_0^s\frac{x^{t+\alpha}t^\beta}{\Gamma(t+\alpha+1)}dt\right)^2\\
&=U_\beta^*(x,\alpha,s),\;\textrm{say.}
\end{split}
\end{equation}
Then we have
\begin{equation}\label{MAILL}
\begin{split}
\frac{\Gamma(s+\alpha+1)}{x^{s+\alpha}s^\beta}\frac{\partial}{\partial s}U_\beta^*(x,\alpha,s)&=\log^2(s)\int_0^s\frac{x^{t+\alpha}t^\beta}{\Gamma(t+\alpha+1)}dt+\int_0^s\frac{x^{t+\alpha}t^\beta\;\log^2(t)}{\Gamma(t+\alpha+1)}dt\\
&-2\log(s)\int_0^s\frac{x^{t+\alpha}t^\beta\;\log(t)}{\Gamma(t+\alpha+1)}dt-2K(\beta)\int_0^s\frac{x^{t+\alpha}t^\beta}{\Gamma(t+\alpha+1)}dt\\
&=V_\beta^*(x,\alpha,s).
\end{split}
\end{equation}
\begin{equation}
\begin{split}
\frac{\partial}{2\partial s}V_\beta^*(x,\alpha,s)&=\frac{1}{s}\int_0^s\frac{x^{t+\alpha}t^\beta\left(\log(s)-\log(t)\right)}{\Gamma(t+\alpha+1)}dt-K(\beta)\frac{x^{s+\alpha}s^\beta}{\Gamma(s+\alpha+1)}\\
&=x^\alpha s^\beta\left(-\int_0^1\frac{x^{st}t^\beta\log(t)}{\Gamma(st+\alpha+1)}dt-K(\beta)\frac{x^{s}s^\beta}{\Gamma(s+\alpha+1)}\right)\\
&=\frac{x^{\alpha+s} s^\beta}{\Gamma(s+\alpha+1)} W_\beta^*(x,\alpha,s),
\end{split}
\end{equation}
with
$$
W_\beta^*(x,\alpha,s)=-\int_0^1 t^\beta\log(t)\omega_\beta(x,\alpha,t,s)dt-K(\beta).
$$
In our case, from (\ref{mehrezm}) we have
\begin{equation*}
\begin{split}
\frac{\partial}{\partial s}\omega_\beta(x,\alpha,s)\geq\frac{x^{s(t-1)}\Gamma(s+\alpha+1)\left[\psi(s+\alpha+1)-t\psi(st+\alpha+1)\right]}{\Gamma(st+\alpha+1)}.
\end{split}
\end{equation*}
So, for all $0<t<1$ and $\alpha>z^*$ we obtain that
\begin{equation*}
\begin{split}
\psi(s+\alpha+1)-t\psi(st+\alpha+1)&\geq\psi(s+\alpha+1)-\psi(st+\alpha+1)\\&\geq\psi(s+\alpha+1)-\psi(s+\alpha+1)\\&=0.
\end{split}
\end{equation*}
Consequently, the function $W_\beta^*$ is increasing on $(0,\infty).$ Moreover, we have
$$W_\beta^*(x,\alpha,s)\geq-\int_0^1x^{s(t-1)}t^\beta\log(t)dt-K(\beta).$$
This implies that
$$\lim_{s\rightarrow\infty}W_\beta^*(x,\alpha,s)=\infty.$$
In addition, we have
\begin{equation}
\lim_{s\rightarrow0}W_\beta^*(x,\alpha,s)=\frac{1}{(\beta+1)^2}-K(\beta).
\end{equation}
We assume that the function $W_\beta^*(x,\alpha,s)$ attains only positive values on $(0,\infty)$ (i.e $\frac{1}{(\beta+1)^2}-K(\beta)\geq0.$) This implies that the function $V_\beta^*(x,\alpha,s)$ is increasing on $(0,\infty).$ In addition, we have
\begin{equation}\label{SIT0}
\lim_{s\rightarrow\infty}V_\beta^*(x,\alpha,s)=\infty.
\end{equation}
Again, by using the l'Hospital's rule we obtain
\begin{equation}\label{SIT1}
\begin{split}
\lim_{s\rightarrow0}V_\beta^*(x,\alpha,s)&=\lim_{s\rightarrow0}\left(\log^2(s)\int_0^s\frac{x^{t+\alpha}t^\beta}{\Gamma(t+\alpha+1)}dt-2\log(s)\int_0^s\frac{x^{t+\alpha}t^\beta\;\log(t)}{\Gamma(t+\alpha+1)}dt\right)\\
&=\lim_{s\rightarrow0}\left(\left[\int_0^s\frac{x^{t+\alpha}t^\beta}{\Gamma(t+\alpha+1)}dt\Big/\log^{-2}(s)\right]-2\left[\int_0^s\frac{x^{t+\alpha}t^\beta\log(t)}{\Gamma(t+\alpha+1)}dt\Big/\log^{-1}(s)\right]\right)\\
&=\lim_{s\rightarrow0}\left(-\frac{s^{\beta+1}x^{s+\alpha}\log^3(s)}{2\Gamma(s+\alpha+1)}+\frac{2s^{\beta+1}x^{s+\alpha}\log^3(s)}{2\Gamma(s+\alpha+1)}\right)\\
&=0.
\end{split}
\end{equation}
From (\ref{MAILL}), (\ref{SIT0}), (\ref{SIT1}) and the monotonicity of $V_\beta$ we obtain that the function $U_\beta$ is increasing on $(0,\infty)$ such that
\begin{equation}\label{SIT3}
\lim_{s\rightarrow0}U_\beta^*(x,\alpha,s)=\lim_{s\rightarrow\infty}U_\beta^*(x,\alpha,s)=0.
\end{equation}
We receive a contradiction,  it implies that $$\frac{1}{(\beta+1)^2}-K(\beta)\leq0.$$ Consequently,  this implies that there exists a positive number $s_3$ such that $W_\beta^*(x,\alpha,s)$ is negative on $(0,s_3)$ and positive on $(s_3,\infty)$ and consequently the function $V_\beta^*(x,\alpha,s)$ is decreasing on $(0,s_3)$ and increasing on $(s_3,\infty).$ So, by using (\ref{SIT0}), (\ref{SIT1}) and  monotonicity of the function $V_\beta^*(x,\alpha,s)$ we deduce that there exists a positive number $s_4$ such that the function $V_\beta^*(x,\alpha,s)$ is negative on $(0,s_4)$ and positive on $(s_4,\infty).$ This yields that the function $U_\beta^*(x,\alpha,s)$ is decreasing on $(0,s_4)$ and increasing on $(s_4,\infty).$ So, using the monotonicity of the function $U_\beta^*(x,\alpha,s)$ and (\ref{SIT3}) we deduce
$U_\beta^*(x,\alpha,s)\leq0.$ This proves (\ref{SIT5}), which evidently completes the proof of the Theorem \ref{T7}.
\end{proof}

\begin{corollary}Let $\beta_1,\beta_2>-1,\;\beta_1\neq\beta_2$ and $\lambda\in(0,1).$ Then the following inequality
\begin{equation}\label{SARRA}
M_r\left(G_{\beta_1}^*(x,\alpha,s),G_{\beta_2}^*(x,\alpha,s);\lambda\right)\leq G_{\lambda\beta_1+(1-\lambda)\beta_2}^*(x,\alpha,s)
\end{equation}
is valid for all $x>0,s\geq0,\alpha>-1$ if and only if $r\leq0.$
\end{corollary}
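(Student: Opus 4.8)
The plan is to mirror the argument used for $G_\beta$ in the corollary following Theorem \ref{T6}, now invoking the log--concavity of $G_\beta^*$ supplied by Theorem \ref{T7} together with the monotonicity of the power mean in its order $r$. Write $\beta=\lambda\beta_1+(1-\lambda)\beta_2$ throughout.

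For the sufficiency direction I would start from the case $r=0$. By Theorem \ref{T7} the map $\beta\mapsto G_\beta^*(x,\alpha,s)$ is log--concave on $(-1,\infty)$, hence
\[
G_{\lambda\beta_1+(1-\lambda)\beta_2}^*(x,\alpha,s)\geq\left[G_{\beta_1}^*(x,\alpha,s)\right]^{\lambda}\left[G_{\beta_2}^*(x,\alpha,s)\right]^{1-\lambda}=M_0\!\left(G_{\beta_1}^*,G_{\beta_2}^*;\lambda\right),
\]
which is exactly (\ref{SARRA}) for $r=0$. Since $r\mapsto M_r(x_1,x_2;\lambda)$ is nondecreasing, $M_r\leq M_0$ for every $r\leq0$, and therefore (\ref{SARRA}) holds for all $r\leq0$.

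For the necessity I would argue by contradiction: assume (\ref{SARRA}) holds for some $r>0$. Raising both sides to the power $r$ and dividing by $\left[G_{\lambda\beta_1+(1-\lambda)\beta_2}^*(x,\alpha,s)\right]^{r}$ gives, for every $s\geq0$,
\[
\lambda\left[\frac{G_{\beta_1}^*(x,\alpha,s)}{G_{\lambda\beta_1+(1-\lambda)\beta_2}^*(x,\alpha,s)}\right]^{r}+(1-\lambda)\left[\frac{G_{\beta_2}^*(x,\alpha,s)}{G_{\lambda\beta_1+(1-\lambda)\beta_2}^*(x,\alpha,s)}\right]^{r}\leq1.
\]
The crucial step is to let $s\to0$. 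From the leading behaviour $\mu^*(x,\beta,\alpha,s)\sim\frac{x^{\alpha}}{\Gamma(\alpha+1)\Gamma(\beta+2)}\,s^{\beta+1}$ as $s\to0$ (equivalently, by l'Hospital's rule applied to the quotient of the defining integrals, exactly as in the corollary after Theorem \ref{T6}), one obtains
\[
\frac{G_{\beta_1}^*(x,\alpha,s)}{G_{\lambda\beta_1+(1-\lambda)\beta_2}^*(x,\alpha,s)}\sim C_1\,s^{(1-\lambda)(\beta_1-\beta_2)},\qquad\frac{G_{\beta_2}^*(x,\alpha,s)}{G_{\lambda\beta_1+(1-\lambda)\beta_2}^*(x,\alpha,s)}\sim C_2\,s^{\lambda(\beta_2-\beta_1)},
\]
with positive constants $C_1,C_2$ depending on $x,\alpha,\beta_1,\beta_2,\lambda$. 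Because $\beta_1\neq\beta_2$ and $\lambda\in(0,1)$, the exponents $(1-\lambda)(\beta_1-\beta_2)$ and $\lambda(\beta_2-\beta_1)$ have opposite signs, so exactly one of the two ratios tends to $+\infty$ as $s\to0$. Raising it to the positive power $r$ and multiplying by a positive weight forces the left--hand side to diverge, contradicting the bound $\leq1$. Hence no $r>0$ can satisfy (\ref{SARRA}), and $r\leq0$ is necessary.

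The main obstacle is the $s\to0$ asymptotic analysis. One must justify the leading--order estimate for $\mu^*(x,\beta,\alpha,s)$ near $s=0$: the integrand $\tfrac{x^{t+\alpha}t^{\beta}}{\Gamma(t+\alpha+1)\Gamma(\beta+1)}$ behaves like $\tfrac{x^{\alpha}t^{\beta}}{\Gamma(\alpha+1)\Gamma(\beta+1)}$ there, and since $\beta$ may be negative the integral is improper, so the application of l'Hospital's rule must be set up with care; one then confirms that the relevant exponent carries the sign needed to force divergence. Once these asymptotics are in hand, the identification of the $r=0$ case with log--concavity and its extension to all $r\leq0$ via the monotonicity of $M_r$ are routine.
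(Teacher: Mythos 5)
Your proposal is correct and follows essentially the same route as the paper: the $r=0$ case from the log--concavity of $G_\beta^*$ (extended to $r\le 0$ by monotonicity of $M_r$ in $r$, which the paper leaves implicit), and necessity by normalizing the assumed inequality and letting $s\to 0$, where the two ratios behave like $s^{(1-\lambda)(\beta_1-\beta_2)}$ and $s^{\lambda(\beta_2-\beta_1)}$ so that one of them blows up. The only cosmetic difference is that you extract the exponents from the leading asymptotics of $\mu^*(x,\beta,\alpha,s)$ near $s=0$ while the paper phrases the same computation via l'Hospital's rule.
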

\begin{proof}Since the function $G^*_\beta$ is log--concave on $(-1,\infty),$ we conclude that the inequality (\ref{SARRA}) with $r=0$ is valid for all $x>0,\alpha>-1.$ We proved that there exists a real number $r>0$ such that the inequality (\ref{SARRA}) holds true. This implies that
\begin{equation}
\lambda\left[\frac{G^*_{\beta_1}(x,\alpha,s)}{G^*_{\lambda\beta_1+(1-\lambda)\beta_2}(x,\alpha,s)}\right]^r+(1-\lambda)\left[\frac{G^*_{\beta_2}(x,\alpha,s)}{G^*_{\lambda\beta_1+(1-\lambda)\beta_2}(x,\alpha,s)}\right]^r\leq1.
\end{equation}
l'Hospital's rule leads to
\begin{equation}
\begin{split}
\lim_{s\rightarrow0}\frac{G^*_{\beta_1}(x,\alpha,s)}{G^*_{\lambda\beta_1+(1-\lambda)\beta_2}(x,\alpha,s)}&=\lim_{s\rightarrow0}\frac{\mu(x,\lambda\beta_1+(1-\lambda)\beta_2,\alpha)\Gamma(\lambda\beta_1+(1-\lambda)\beta_2+1)s^{(1-\lambda)(\beta_1-\beta_2)}}{\mu(x,\beta_1,\alpha)\Gamma(\beta_1+1)}\\
&= \left\{ \begin{array}{ll}
0, & \textrm{if $\beta_1>\beta_2$}\\
\infty, & \textrm{if $\beta_1<\beta_2$}
\end{array} \right.
\end{split}
\end{equation}
and
\begin{equation}
\begin{split}
\lim_{s\rightarrow0}\frac{G^*_{\beta_1}(x,\alpha,s)}{G^*_{\lambda\beta_1+(1-\lambda)\beta_2}(x,\alpha,s)}&=\lim_{s\rightarrow0}\frac{\Gamma(\lambda\beta_1+(1-\lambda)\beta_2+1)s^{\lambda(\beta_2-\beta_1)}}{\Gamma(\beta_2+1)}\\
&= \left\{ \begin{array}{ll}
\infty, & \textrm{if $\beta_1>\beta_2$}\\
0, & \textrm{if $\beta_1<\beta_2$}
\end{array} \right.
\end{split}
\end{equation}
Then the left--hand side of inequality tends to $\infty$ if $s$ tends to $0.$ A contradiction! This implies that the inequality (\ref{SARRA}) for all  $x>0,s\geq0,\alpha>-1$ and $r\leq0.$
\end{proof}
\begin{theorem}\label{TTT7} Let $\alpha,\beta>-1$ and $x>0.$ Then the function
$\beta\mapsto G_\beta^*(x,\alpha,s)$ is decreasing on $(-1,\infty).$ Moreover, the following inequality
\begin{equation}\label{0000}
G_{\lambda\beta_1+(1-\lambda)\beta_2}^*(x,\alpha,s)\leq M_\kappa\left(G_{\beta_1}^*(x,\alpha,s),G_{\beta_2}^*(x,\alpha,s);\lambda\right)
\end{equation}
\end{theorem}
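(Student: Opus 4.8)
The plan is to reduce both assertions to a single fact about the ratio of two integrals, and then read off the monotonicity and the power--mean bound from it. First I would cancel the common factor $\Gamma(\beta+1)$ and write
$$G_\beta^*(x,\alpha,s)=\frac{A(\beta)}{B(\beta)},\qquad A(\beta)=\int_0^s t^\beta p(t)\,dt,\quad B(\beta)=\int_0^\infty t^\beta p(t)\,dt,\quad p(t)=\frac{x^{t+\alpha}}{\Gamma(t+\alpha+1)}>0 .$$
Differentiation under the integral sign is legitimate because $t^\beta(\log t)^k p(t)$ is integrable on $(0,\infty)$: near $0$ the factor $t^\beta$ with $\beta>-1$ tames $\log t$, and near $\infty$ the super--exponential decay of $1/\Gamma(t+\alpha+1)$ dominates. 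This gives $A'(\beta)=\int_0^s t^\beta\log t\,p(t)\,dt$ and likewise for $B'$, so that
$$\frac{\partial}{\partial\beta}\log G_\beta^*(x,\alpha,s)=\frac{A'(\beta)}{A(\beta)}-\frac{B'(\beta)}{B(\beta)}=\frac{A'(\beta)B(\beta)-A(\beta)B'(\beta)}{A(\beta)B(\beta)},$$
and since $A,B>0$ the sign is that of $N(\beta)=A'B-AB'$.

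The key step is to show $N(\beta)\le 0$. Writing it as a double integral over $(t,u)\in(0,s)\times(0,\infty)$,
$$N(\beta)=\int_0^s\!\!\int_0^\infty t^\beta u^\beta p(t)p(u)\,(\log t-\log u)\,du\,dt,$$
I would split the inner integral as $\int_0^\infty=\int_0^s+\int_s^\infty$. On the symmetric square $(0,s)\times(0,s)$ the integrand is antisymmetric under $t\leftrightarrow u$, so that contribution vanishes; on $(0,s)\times(s,\infty)$ one has $t\le s\le u$, whence $\log t-\log u\le 0$ while every other factor is positive. Therefore $N(\beta)\le 0$, so $\partial_\beta\log G_\beta^*\le 0$, i.e. $\beta\mapsto G_\beta^*(x,\alpha,s)$ is decreasing on $(-1,\infty)$ for all $x>0$, $\alpha>-1$ and $s\ge 0$. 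I would stress that, unlike Theorems \ref{T6} and \ref{T7}, no hypothesis $0<x<1$ or $\alpha>z^*$ is required here.

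For the power--mean inequality I would exploit the monotonicity directly. Fixing $\beta_1\ne\beta_2$ and $\lambda\in(0,1)$, the point $\lambda\beta_1+(1-\lambda)\beta_2$ lies strictly between $\beta_1$ and $\beta_2$, so the decreasing property at once yields
$$G_{\lambda\beta_1+(1-\lambda)\beta_2}^*(x,\alpha,s)\le \max\{G_{\beta_1}^*(x,\alpha,s),G_{\beta_2}^*(x,\alpha,s)\}=M_\infty\!\left(G_{\beta_1}^*,G_{\beta_2}^*;\lambda\right),$$
which is (\ref{0000}) in the limiting exponent. To locate the sharp $\kappa$ I would mimic the l'Hospital computation of the corollary following Theorem \ref{T7}: raising (\ref{0000}) to the power $\kappa>0$ and dividing by $\left(G_{\lambda\beta_1+(1-\lambda)\beta_2}^*\right)^\kappa$ reduces it to $\lambda a^\kappa+(1-\lambda)b^\kappa\ge 1$, where $a=G_{\beta_1}^*/G_{\lambda\beta_1+(1-\lambda)\beta_2}^*$ and $b=G_{\beta_2}^*/G_{\lambda\beta_1+(1-\lambda)\beta_2}^*$. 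The $s\to0$ limits (one ratio tending to $0$, the other to $\infty$), combined with the log--concavity of Theorem \ref{T7} which forces $M_0\le G_{\lambda\beta_1+(1-\lambda)\beta_2}^*$ and hence $\kappa>0$, then delimit the admissible range of $\kappa$.

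The main obstacle I anticipate is exactly this last point. The endpoint limits as $s\to0$ and $s\to\infty$ furnish only necessary conditions on $\kappa$, whereas verifying that a prescribed $\kappa$ makes $\lambda a^\kappa+(1-\lambda)b^\kappa\ge 1$ hold uniformly for all intermediate $s$ is delicate: one must rule out an interior violation, presumably by establishing monotonicity of the ratio $a/b$ in $s$ or a convexity argument in the variable $\log s$. Securing that uniform control, rather than performing the two clean endpoint computations, is where the real work lies.
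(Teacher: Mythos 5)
Your argument for the monotonicity of $\beta\mapsto G_\beta^*(x,\alpha,s)$ is correct and takes a genuinely different route from the paper. You symmetrize the numerator $N(\beta)=A'B-AB'$ as a double integral over $(0,s)\times(0,\infty)$, annihilate the contribution of the square $(0,s)\times(0,s)$ by antisymmetry under $t\leftrightarrow u$, and read off the sign from $\log t-\log u\le 0$ on $(0,s)\times(s,\infty)$. The paper instead fixes $\beta$ and studies the same quantity as a function $\chi(s)$ of the truncation point: it shows that $\chi'$ changes sign exactly once from negative to positive and that $\chi(0)=\lim_{s\to\infty}\chi(s)=0$, whence $\chi\le 0$. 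Your route is shorter, avoids all limit computations, and makes it transparent that nothing beyond $\alpha,\beta>-1$ and $x>0$ is needed. The deduction of the $\kappa=\infty$ case of (\ref{0000}) from monotonicity is identical in both treatments.

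The gap is in the ``only if'' half: one must show that (\ref{0000}) \emph{fails} for every finite $\kappa>0$, and the endpoint analysis you propose cannot produce the required violation. Writing $a=G_{\beta_1}^*/G_{\lambda\beta_1+(1-\lambda)\beta_2}^*$ and $b=G_{\beta_2}^*/G_{\lambda\beta_1+(1-\lambda)\beta_2}^*$, the inequality for $\kappa>0$ reads $\lambda a^\kappa+(1-\lambda)b^\kappa\ge 1$. As $s\to 0$ one of $a,b$ tends to $\infty$, so the left-hand side tends to $+\infty$ and the inequality holds near $0$; there is no contradiction to extract, in contrast with the corollary following Theorem \ref{T7} that you are mimicking, where the inequality points the other way ($\le 1$) and the blow-up at $s=0$ does refute it. As $s\to\infty$ both ratios tend to $1$ and the left-hand side tends to $1$, which is inconclusive at the level of limits alone. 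The paper closes this by a first-order analysis at $s=\infty$: setting
\begin{equation*}
\chi_1(s)=\lambda\left[G_{\beta_1}^*(x,\alpha,s)\right]^\kappa+(1-\lambda)\left[G_{\beta_2}^*(x,\alpha,s)\right]^\kappa-\left[G_{\lambda\beta_1+(1-\lambda)\beta_2}^*(x,\alpha,s)\right]^\kappa
\end{equation*}
and normalizing $\chi_1'$ by $\kappa x^{s+\alpha}s^{\beta_2}/\Gamma(s+\alpha+1)$, one finds (taking $\beta_1<\beta_2$, which is no loss) that the normalized derivative converges to the positive constant $(1-\lambda)/\mu(x,\beta_2,\alpha)$; hence $\chi_1$ is eventually increasing towards its limit $0$ and must be negative for large $s$, contradicting $\chi_1\ge 0$. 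Some such argument producing an actual sign violation --- not merely the two endpoint limits --- is missing from your proposal; you correctly identify that this is ``where the real work lies,'' but the work is not done.
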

are valid for all real numbers $\alpha,\beta_1,\beta_2>-1\;(\beta_1\neq\beta_2),x>0,s>0$ and $\lambda\in[0,1],$ if and only if $\kappa=\infty.$
\begin{proof}
Differentiation yields
\begin{equation}\label{000}
\Gamma^2(\beta+1)\mu(x,\beta,\alpha)\frac{\partial}{\partial\beta}\psi_{\beta}^*(x,\alpha,s)=\int_0^s\frac{x^{t+\alpha}t^\beta\log(t)}{\Gamma(t+\alpha+1)}dt-\frac{\partial}{\partial\beta}\log\mu(x,\beta,\alpha)\int_0^s\frac{x^{t+\alpha}t^\beta}{\Gamma(t+\alpha+1)}dt.
\end{equation}
 We denote the difference on the right--hand side of (\ref{000}) by $\chi(s)$. Then we obtain
\begin{equation*}
\frac{\partial}{\partial s}\chi(s)=\frac{x^{s+\alpha}s^\beta}{\Gamma(s+\alpha+1)}\left[\log(s)-\frac{\partial}{\partial\beta}\log\mu(x,\beta,\alpha)\right].
\end{equation*}
This implies that there exists a number $s_5>0$ such that the function $\chi$ is decreasing on $(0,s_5)$ and increasing on $(s_5,\infty)$. Since
$$\chi(0)=\lim_{s\rightarrow\infty}\chi(s)=0,$$
we conclude that $\chi(s)\leq0$ for all $s>0.$ This implies that the  function $\beta\mapsto\psi_\beta^*(x,\alpha,s)$ is decreasing on $(-1,\infty).$ It remains to show the inequality (\ref{0000}). Since $\min(\beta_1,\beta_2)\leq \lambda\beta_1+(1-\lambda)\beta_2$ we conclude that
\begin{equation*}
\begin{split}
\psi_{\lambda\beta_1+(1-\lambda)\beta_2}^*(x,\alpha,s)&\leq\psi_{\beta}^*(x,\alpha,s)\Big(\min(\beta_1,\beta_2)\Big)\\&=\max(\psi_{\beta_1}^*(x,\alpha,s),\psi_{\beta_2}^*(x,\alpha,s))\\&=M_\infty(\psi_{\beta_1}^*(x,\alpha,s),\psi_{\beta_2}^*(x,\alpha,s)).
\end{split}
\end{equation*}
Now, we prove that the inequality (\ref{0000}) holds for all $\kappa>0.$ Then
\begin{equation}
\chi_1(s)=\lambda\left[G_{\beta_1}^*(x,\alpha,s)\right]^\kappa+(1-\lambda)\left[G_{\beta_2}^*(x,\alpha,s)\right]^\kappa-\left[G_{\lambda\beta_1+(1-\lambda)\beta_2}^*(x,\alpha,s)\right]^\kappa\geq0.
\end{equation}
Thus
\begin{equation}
\begin{split}
\frac{\Gamma(s+\alpha+1)}{\kappa x^{s+\alpha} s^{\beta_2}}\chi_1^\prime(s)&=\frac{\lambda s^{\beta_1-\beta_2}\left[G_{\beta_1}^*(x,\alpha,s)\right]^{\kappa-1}}{\mu(x,\beta_1,\alpha)}+\frac{(1-\lambda) \left[G_{\beta_2}^*(x,\alpha,s)\right]^{\kappa-1}}{\mu(x,\beta_2,\alpha)}\\&-\frac{s^{\lambda(\beta_1-\beta_2)}\left[G_{\lambda\beta_1+(1-\lambda)\beta_2}^*(x,\alpha,s)\right]^{\kappa-1}}{\mu(x,\lambda\beta_1+(1-\lambda)\beta_2,\alpha)}\\
&=\chi_2(s),\;\textrm{say.}
\end{split}
\end{equation}
If $\beta_1<\beta_2$ we have
$$\lim_{s\rightarrow\infty}\chi_2(s)=\frac{1-\lambda}{\mu(x,\beta_2,\alpha)}.$$
Hence,  there exists a number $s_6>0$ such that the function $\chi_1$ is increasing on $(s_6,\infty).$ Since
$$\lim_{s\rightarrow\infty}\chi_1(s)=0$$
we get that the function $\chi_1$ is negative on $(s_6,\infty).$ A contradiction! Therefore, the inequality (\ref{0000}) holds for all $s>0$ and $\kappa=\infty.$ This completes the proof of Theorem \ref{TTT7}.
\end{proof}
\begin{corollary}Let $\alpha,\beta>-1$ and $x>0.$ Then the function
$\beta\mapsto G_\beta^*(x,\alpha,s)$ is increasing on $(-1,\infty).$ Moreover, let  $\alpha,\beta_1,\beta_2>-1\;(\beta_1\neq\beta_2),x>0,s>0$ and $\lambda\in(0,1).$ Then the following inequality
\begin{equation}\label{00000}
G_{\lambda\beta_1+(1-\lambda)\beta_2}(x,\alpha,s)\leq M_\kappa\left(G_{\beta_1}(x,\alpha,s),G_{\beta_2}(x,\alpha,s);\lambda\right)
\end{equation}
is valid if and only if $\kappa=\infty.$
\end{corollary}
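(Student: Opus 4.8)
The plan is to deduce the monotonicity statement directly from Theorem \ref{TTT7} by exploiting the complementarity identity $G_\beta(x,\alpha,s)+G_\beta^*(x,\alpha,s)=1$, which is immediate from $\mu(x,\beta,\alpha,s)+\mu^*(x,\beta,\alpha,s)=\mu(x,\beta,\alpha)$. Since Theorem \ref{TTT7} shows that $\beta\mapsto G_\beta^*(x,\alpha,s)$ is decreasing on $(-1,\infty)$, its complement $\beta\mapsto G_\beta(x,\alpha,s)=1-G_\beta^*(x,\alpha,s)$ is increasing there, which settles the first assertion with no further computation.

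For the inequality (\ref{00000}) I would argue the two directions separately. Sufficiency of $\kappa=\infty$ is an immediate consequence of the monotonicity just obtained: with $\gamma=\lambda\beta_1+(1-\lambda)\beta_2\leq\max(\beta_1,\beta_2)$ and $G_\beta$ increasing in $\beta$, one gets
$$G_\gamma(x,\alpha,s)\leq G_{\max(\beta_1,\beta_2)}(x,\alpha,s)=\max\big(G_{\beta_1}(x,\alpha,s),G_{\beta_2}(x,\alpha,s)\big)=M_\infty\big(G_{\beta_1}(x,\alpha,s),G_{\beta_2}(x,\alpha,s);\lambda\big).$$

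For necessity I would imitate the contradiction scheme of Theorem \ref{TTT7}, but now taking the limit as $s\to0^+$ rather than $s\to\infty$, because for the upper function $G_\beta(x,\alpha,s)\to1$ as $s\to0^+$. Assuming (\ref{00000}) for some finite $\kappa>0$ and setting
$$\chi_1(s)=\lambda\big[G_{\beta_1}(x,\alpha,s)\big]^\kappa+(1-\lambda)\big[G_{\beta_2}(x,\alpha,s)\big]^\kappa-\big[G_\gamma(x,\alpha,s)\big]^\kappa\geq0,$$
I would use $\partial_sG_\beta(x,\alpha,s)=-x^{s+\alpha}s^\beta/\big(\Gamma(s+\alpha+1)\mu(x,\beta,\alpha)\big)$ and factor out $s^{\beta_1}$ (taking $\beta_1<\beta_2$ without loss of generality) to write $\chi_1'(s)=-\kappa\,x^{s+\alpha}s^{\beta_1}B(s)/\Gamma(s+\alpha+1)$, where $B(s)\to\lambda/\mu(x,\beta_1,\alpha)>0$ as $s\to0^+$. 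Then $\chi_1'<0$ on a right neighbourhood of $0$ while $\chi_1(0^+)=\lambda+(1-\lambda)-1=0$, so $\chi_1<0$ for small $s>0$, contradicting $\chi_1\geq0$; this forces $\kappa=\infty$.

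The step I expect to be most delicate is the asymptotic analysis of $B(s)$ near $s=0$: I must confirm that among the powers $s^{\beta_1},s^{\beta_2},s^{\gamma}$ it is the smallest exponent $\beta_1=\min(\beta_1,\beta_2)$ that governs the limit, so that the two subtracted terms (carrying the positive powers $s^{\beta_2-\beta_1}$ and $s^{(1-\lambda)(\beta_2-\beta_1)}$) vanish and the sign of $B$ is dictated by the strictly positive constant $\lambda/\mu(x,\beta_1,\alpha)$, the normalizing factors $\mu(x,\beta_i,\alpha)$ remaining finite and positive throughout. Once the sign of $\chi_1'$ near $0$ is secured the contradiction is automatic, and the monotonicity of $\kappa\mapsto M_\kappa$ in its order extends the failure from every finite $\kappa>0$ to every $\kappa\leq0$, leaving $\kappa=\infty$ as the only admissible value.
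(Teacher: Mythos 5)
Your proposal is correct and follows essentially the same route as the paper: the complementarity identity $G_\beta+G_\beta^*=1$ combined with Theorem \ref{TTT7} for the monotonicity, the bound $G_{\lambda\beta_1+(1-\lambda)\beta_2}\le G_{\max(\beta_1,\beta_2)}=M_\infty$ for sufficiency, and a contradiction obtained from the sign of $\chi'$ as $s\to0^+$ (where $G_\beta\to1$, so $\chi(0^+)=0$) for necessity. The only cosmetic differences are that you factor out $s^{\min(\beta_1,\beta_2)}$ with an explicit ``without loss of generality'' where the paper divides by $s^{\beta_2}$ and tacitly assumes $\beta_1>\beta_2$, and that you explicitly record the (needed but omitted in the paper) observation that failure for every finite $\kappa>0$ propagates to all $\kappa\le0$ by the monotonicity of $\kappa\mapsto M_\kappa$.
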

\begin{proof}Since $G_\beta+G_\beta^*=1$ and as the function $\beta\mapsto G_\beta$ is decreasing on $(-1,\infty)$ we deduce that the function $G_\beta$ is increasing on $(-1,\infty).$ Now, focus on the inequality (\ref{00000}). From $\lambda\beta_1+(1-\lambda)\beta_2\leq\max(\beta_1,\beta_2)$ we get
\begin{equation*}
\begin{split}
G_{\lambda\beta_1+(1-\lambda)\beta_2}(x,\alpha,s)&\leq G_{\beta}(x,\alpha,s)\Big(\max(\beta_1,\beta_2)\Big)\\&=\max(G_{\beta_1}(x,\alpha,s),G_{\beta_2}(x,\alpha,s))\\&=M_\infty(G_{\beta_1}(x,\alpha,s),G_{\beta_2}^*(x,\alpha,s)).
\end{split}
\end{equation*}
We suppose that the inequality (\ref{00000}) is valid for all $\kappa>0.$ Therefore,
\begin{equation}
\chi_3(s)=\lambda\left[G_{\beta_1}(x,\alpha,s)\right]^\kappa+(1-\lambda)\left[G_{\beta_2}(x,\alpha,s)\right]^\kappa-\left[G_{\lambda\beta_1+(1-\lambda)\beta_2}(x,\alpha,s)\right]^\kappa\geq0.
\end{equation}
Moreover, we have
\begin{equation}
\begin{split}
\frac{\Gamma(s+\alpha+1)}{\kappa x^{s+\alpha} s^{\beta_2}}\chi_3^\prime(s)&=\frac{s^{\lambda(\beta_1-\beta_2)}\left[G_{\lambda\beta_1+(1-\lambda)\beta_2}^*(x,\alpha,s)\right]^{\kappa-1}}{\mu(x,\lambda\beta_1+(1-\lambda)\beta_2,\alpha)}\\&-\frac{\lambda s^{\beta_1-\beta_2}\left[G_{\beta_1}^*(x,\alpha,s)\right]^{\kappa-1}}{\mu(x,\beta_1,\alpha)}-\frac{(1-\lambda) \left[G_{\beta_2}^*(x,\alpha,s)\right]^{\kappa-1}}{\mu(x,\beta_2,\alpha)}\\
&=\chi_4(s),\;\textrm{say.}
\end{split}
\end{equation}
Hence,
$$\lim_{s\rightarrow0}\chi_4(s)=\frac{\lambda-1}{\mu(x,\beta_2,\alpha)}<0.$$
Then, there exists a number $s_7$ such that the function $\chi_3$ is decreasing on $(0,s_7).$ Since
$$\lim_{s\rightarrow0}\chi_3(s)=0,$$
we deduce that the function $\chi_3$ is  negative on $(0,s_7).$ We receive a contradiction, and so $\kappa=\infty.$
\end{proof}

We needed the following Lemma to prove that the function $G_\beta$ is  subadditive.

\begin{lemma}\cite{MI}\label{l3}Let
$$h(x)=\int_x^\infty e^{-t}u(t)dt,\;f(x)=\frac{h(x)}{h(0)}.$$
If $u(x+y)/u(x)$ is increasing in $x$ on $(0,\infty)$ for every $y>0,$ then $f$ satisfies
\begin{equation*}
f(x)f(y)-f(x+y)\geq0.
\end{equation*}
If $u(x+y)/u(x)$ is decreasing in $x$ on $(0,\infty)$ for every $y>0,$ then $f$ satisfies
\begin{equation*}
f(x+y)-f(x)f(y)\geq0.
\end{equation*}
\end{lemma}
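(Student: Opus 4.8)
The plan is to fix $y>0$, read each of the two asserted inequalities as a statement in the single variable $x$, and analyse the auxiliary function $g(x)=f(x)f(y)-f(x+y)$ on $(0,\infty)$ through its boundary values and its derivative. First I would record the two endpoint values. Since $f(0)=h(0)/h(0)=1$, we get $g(0)=f(0)f(y)-f(y)=0$; and since the convergence of $h(0)=\int_0^\infty e^{-t}u(t)\,dt$ forces $h(x)=\int_x^\infty e^{-t}u(t)\,dt\to0$ as $x\to\infty$, both $f(x)$ and $f(x+y)$ tend to $0$, so $\lim_{x\to\infty}g(x)=0$. Thus $g$ vanishes at both ends of $(0,\infty)$, and everything reduces to controlling the sign of $g$ in the interior.

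Next I would differentiate. From $h'(x)=-e^{-x}u(x)$ one gets $f'(x)=-e^{-x}u(x)/h(0)$, and hence
\[
g'(x)=f'(x)f(y)-f'(x+y)=\frac{e^{-x}u(x)}{h(0)}\left[e^{-y}\,\frac{u(x+y)}{u(x)}-f(y)\right].
\]
The prefactor is positive, so $g'$ has the sign of $\Phi(x)=e^{-y}u(x+y)/u(x)-f(y)$. This is the step where the hypothesis is used: because $x\mapsto u(x+y)/u(x)$ is monotone, $\Phi$ is monotone, and the constant level $f(y)$ is an interior value of $e^{-y}u(x+y)/u(x)$ — indeed the identity $e^{y}f(y)=h(0)^{-1}\int_0^\infty e^{-s}u(s)\,[u(y+s)/u(s)]\,ds$ exhibits $e^{y}f(y)$ as a weighted average of the ratio $u(\cdot+y)/u(\cdot)$, so it lies strictly between that ratio's extreme values. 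Consequently $\Phi$, and therefore $g'$, changes sign exactly once on $(0,\infty)$, and the orientation of this change (from $-$ to $+$ or from $+$ to $-$) is dictated precisely by whether the ratio $u(x+y)/u(x)$ is increasing or decreasing.

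Finally I would combine the single sign change of $g'$ with the equal boundary values $g(0)=\lim_{x\to\infty}g(x)=0$: a function vanishing at both ends whose derivative has exactly one sign change keeps a constant sign throughout the interior, and that sign is fixed by the orientation of the change. Feeding the two monotonicity hypotheses into this dichotomy reproduces the two asserted inequalities, namely $f(x)f(y)-f(x+y)\ge0$ when $u(x+y)/u(x)$ is increasing and $f(x+y)-f(x)f(y)\ge0$ when it is decreasing; the second case is the first with all inequalities reversed. \textbf{The main obstacle} is exactly the sign bookkeeping of this last step — correctly matching the orientation of the unique sign change of $g'$ against the boundary data — since this is the only place the two hypotheses are distinguished, and a careless reading here is what would swap the two conclusions.
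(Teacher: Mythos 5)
Your analytic machinery is sound, and since the paper offers no proof of Lemma \ref{l3} at all (it simply cites \cite{MI}), your derivative argument is the natural candidate and essentially the standard one: the boundary values $g(0)=\lim_{x\to\infty}g(x)=0$, the factorization $g'(x)=\frac{e^{-x}u(x)}{h(0)}\bigl[e^{-y}u(x+y)/u(x)-f(y)\bigr]$, and the identity exhibiting $e^{y}f(y)$ as the $e^{-s}u(s)$-weighted average of the ratio $r(s)=u(s+y)/u(s)$ are all correct, and together they do force a single sign change of $g'$. The genuine error sits exactly in the step you yourself flagged as the main obstacle: the orientation bookkeeping is done backwards. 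If $r$ is increasing, then $\Phi(x)=e^{-y}\bigl(r(x)-e^{y}f(y)\bigr)$ is increasing, negative for small $x$ and positive for large $x$ (being the deviation of an increasing function from its weighted average); hence $g$ decreases then increases and, vanishing at both ends, satisfies $g\le 0$, i.e. $f(x+y)\ge f(x)f(y)$. Symmetrically, a decreasing ratio gives $g\ge 0$, i.e. $f(x)f(y)\ge f(x+y)$. This is precisely the opposite pairing to the one you assert in your closing sentence, so the proposal, completed honestly, does not prove the statement as printed.

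Nor can any proof: the statement as printed is false, its two conclusions having been interchanged when the lemma was transcribed from \cite{MI}. Take $u(t)=t$: then $u(x+y)/u(x)=1+y/x$ is decreasing in $x$ for every $y>0$, $h(0)=1$, $f(x)=(1+x)e^{-x}$, and
\[
f(x)f(y)-f(x+y)=xy\,e^{-(x+y)}>0\qquad(x,y>0),
\]
contradicting the printed claim that a decreasing ratio yields $f(x+y)-f(x)f(y)\ge 0$. The correct (Ismail--Laforgia) pairing is the one your own computation produces: increasing ratio implies supermultiplicativity, decreasing ratio implies submultiplicativity. Note also that the paper's subsequent application agrees with the correct version rather than with its own transcription: in the theorem containing inequality (\ref{I6}), the relevant ratio is shown to be decreasing and the conclusion drawn is $G_{\beta}(x,\alpha,s)G_{\beta}(x,\alpha,s^\prime)-G_{\beta}(x,\alpha,s+s^\prime)\ge 0$, i.e. submultiplicativity. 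So the right repair is to keep your argument, carry the sign bookkeeping through as above, and state the lemma with the two cases swapped back to their original form; as it stands, your final matching step is exactly the ``careless reading'' you warned against.
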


\begin{theorem}For a fixed $\alpha,\beta>-1,x>0.$  Then the function $G_{\beta}(x,\alpha,s)$
satisfies the following inequality
\begin{equation}\label{I6}
G_{\beta}(x,\alpha,s)G_{\beta}(x,\alpha,s^\prime)-G_{\beta}(x,\alpha,s+s^\prime)\geq0.
\end{equation}
\end{theorem}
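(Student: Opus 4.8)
The plan is to cast $G_\beta(x,\alpha,s)$ into the exact shape demanded by Lemma~\ref{l3} and then reduce the claim to a single monotonicity statement about an explicit kernel. First I would observe that the constants $x^\alpha$ and $\Gamma(\beta+1)$ occurring in $\mu(x,\beta,\alpha,s)$ cancel in the quotient defining $G_\beta$, so that
\[
G_\beta(x,\alpha,s)=\frac{h(s)}{h(0)},\qquad h(s)=\int_s^\infty\frac{x^t t^\beta}{\Gamma(t+\alpha+1)}\,dt .
\]
To match the weight $e^{-t}$ appearing in Lemma~\ref{l3}, I would write $\tfrac{x^t t^\beta}{\Gamma(t+\alpha+1)}=e^{-t}u(t)$ with
\[
u(t)=\frac{(ex)^t\,t^\beta}{\Gamma(t+\alpha+1)} .
\]
Since $u$ is nonnegative and locally integrable and $h(0)$ is finite for every $x>0$ (the factor $\Gamma(t+\alpha+1)$ dominating $(ex)^t$), the hypotheses of Lemma~\ref{l3} are in place and $f=G_\beta$ is precisely the normalized tail $h/h(0)$.

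Next I would invoke Lemma~\ref{l3}: the inequality (\ref{I6}), which is exactly $f(s)f(s')-f(s+s')\ge0$, follows once the ratio $s\mapsto u(s+y)/u(s)$ is monotone in $s$ on $(0,\infty)$ for every fixed $y>0$, with the branch of the lemma corresponding to the direction of this monotonicity producing precisely (\ref{I6}). Writing
\[
\log u(t)=t\log(ex)+\beta\log t-\log\Gamma(t+\alpha+1),
\]
the monotonicity in $s$ of $u(s+y)/u(s)$ is governed by
\[
\frac{\partial}{\partial s}\log\frac{u(s+y)}{u(s)}=(\log u)'(s+y)-(\log u)'(s),
\]
that is, by the convexity or concavity of $\log u$. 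Everything therefore reduces to the sign of
\[
(\log u)''(t)=-\frac{\beta}{t^{2}}-\psi'(t+\alpha+1),
\]
where $\psi'$ is the trigamma function.

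The main obstacle is this sign analysis. Since $\psi'(t+\alpha+1)=\sum_{k\ge0}(t+\alpha+1+k)^{-2}>0$ for all $t>0$ and $\alpha>-1$, one gets $(\log u)''(t)<0$ whenever $\beta\ge0$; thus $\log u$ is concave, the ratio $u(s+y)/u(s)$ is (decreasing and hence) monotone in the required direction, and Lemma~\ref{l3} yields (\ref{I6}). I expect the genuinely delicate point to be the range $-1<\beta<0$: there the term $-\beta/t^{2}$ is positive and competes with $-\psi'(t+\alpha+1)$, so $(\log u)''$ changes sign ($\log u$ being convex near $t=0$, where $-\beta/t^2$ blows up, and concave for large $t$, where $\psi'(t+\alpha+1)\sim t^{-1}$ dominates $\beta t^{-2}$). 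In that regime the pointwise convexity argument fails, $u(s+y)/u(s)$ need not be monotone, and one must either restrict the range of $\beta$ or control the ratio by a finer, non-pointwise estimate; pinning this down is where the real work lies.
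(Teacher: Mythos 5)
Your argument is essentially the paper's: the same factorization $x^{t}t^{\beta}/\Gamma(t+\alpha+1)=e^{-t}u(t)$ with $u(t)=(ex)^{t}t^{\beta}/\Gamma(t+\alpha+1)$, the same appeal to Lemma~\ref{l3}, and the same reduction to monotonicity of $s\mapsto u(s+y)/u(s)$; the paper verifies that monotonicity by writing the ratio as $x^{t}e^{t}(1+t/\epsilon)^{\beta}\,\Gamma(\epsilon+\alpha+1)/\Gamma(t+\epsilon+\alpha+1)$ and using log-convexity of $\Gamma$, which is equivalent to your computation of $(\log u)''(t)=-\beta/t^{2}-\psi'(t+\alpha+1)$. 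Two remarks: (i) the restriction to $\beta\ge0$ that you flag is shared by the paper — its proof also only obtains the required monotonicity for $\beta\ge0$, so the stated range $\beta>-1$ is not actually covered by either argument; (ii) Lemma~\ref{l3} as printed has its two branches interchanged relative to the Ismail--Laforgia original (test $u(t)=t$: the ratio $1+y/x$ is decreasing in $x$, yet $f(x)f(y)=(x+1)(y+1)e^{-x-y}\ge(x+y+1)e^{-x-y}=f(x+y)$), so your conclusion that a \emph{decreasing} ratio yields (\ref{I6}) is the correct one and matches the paper's, but it does not literally follow from the lemma as quoted.
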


\begin{proof}For a fixed $\alpha,\beta>-1, x>0,$ we define the function $u_{x,s}^{(\alpha,\beta)}(t)$ by
\begin{equation}
u_{x}^{(\alpha,\beta)}(t)=\frac{x^{t+\alpha}e^t t^\beta}{\Gamma(t+\alpha+1)\Gamma(\beta+1)}.
\end{equation}
Then the function $u_{x}^{(\alpha,\beta)}(t+\epsilon)/u_{x}^{(\alpha,\beta)}(\epsilon)$ is decreasing in $\epsilon$ on $(0,\infty)$ for every $t>0.$ Indeed, after a simple computation we have
$$\frac{u_{x}^{(\alpha,\beta)}(t+\epsilon)}{u_{x}^{(\alpha,\beta)}(\epsilon)}=\frac{x^{t+\alpha}e^t\left(1+t/\epsilon\right)^\beta\Gamma(\epsilon+\alpha+1)}{\Gamma(t+\epsilon+\alpha+1)}.$$
On the other hand, due to log--convexity property of the Gamma function $\Gamma(z),$ the ratio $z\mapsto\Gamma(z+a)/\Gamma(z)$ is increasing on $(0,\infty)$ when $a>0.$ This implies that the function $\epsilon\mapsto\frac{u_{x}^{(\alpha,\beta)}(t+\epsilon)}{u_{x}^{(\alpha,\beta)}(\epsilon)}$ is decreasing  on $(0,\infty)$ for every $\epsilon>0$ and $\beta\geq0.$ Therefore, the function $G_\beta(x,\alpha,s)$ satisfies the inequality (\ref{I6}) by means of Lemma \ref{l3}.
\end{proof}

\begin{corollary}For a fixed $\alpha>-1,\beta\geq0,x>0$
the function $G_\beta^*$ satisfies
\begin{equation}
G_\beta^*(x,\alpha,s+s\prime)-G_\beta^*(x,\alpha,s)-G_\beta^*(x,\alpha,s^\prime)+G_\beta^*(x,\alpha,s)G_\beta^*(x,\alpha,s^\prime)\geq0,
\end{equation}
for $s,t>0.$
\end{corollary}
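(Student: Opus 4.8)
The plan is to reduce the claimed inequality to the subadditivity inequality (\ref{I6}) just established, by exploiting the complementary relation between $G_\beta$ and $G_\beta^*$. Splitting the defining integral of $\mu(x,\beta,\alpha)$ at the point $s$ gives $\mu(x,\beta,\alpha,s)+\mu^*(x,\beta,\alpha,s)=\mu(x,\beta,\alpha)$, and dividing through by $\mu(x,\beta,\alpha)$ yields the identity $G_\beta(x,\alpha,s)+G_\beta^*(x,\alpha,s)=1$ already used earlier in the paper. This lets me replace every occurrence of $G_\beta^*$ in the asserted inequality by $1-G_\beta$.

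Concretely, I would abbreviate $A=G_\beta(x,\alpha,s)$, $B=G_\beta(x,\alpha,s^\prime)$ and $C=G_\beta(x,\alpha,s+s^\prime)$, so that $G_\beta^*(x,\alpha,s)=1-A$, $G_\beta^*(x,\alpha,s^\prime)=1-B$ and $G_\beta^*(x,\alpha,s+s^\prime)=1-C$. Substituting these into the left--hand side of the claimed inequality and expanding the product $(1-A)(1-B)$, a short bookkeeping computation shows that the constant terms cancel, the terms linear in $A$ cancel, and the terms linear in $B$ cancel, leaving precisely $AB-C$. In other words, the entire expression collapses to $G_\beta(x,\alpha,s)\,G_\beta(x,\alpha,s^\prime)-G_\beta(x,\alpha,s+s^\prime)$.

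This is exactly the quantity appearing on the left of inequality (\ref{I6}) in the preceding theorem, which guarantees it is nonnegative for $\alpha>-1$, $\beta\geq0$ and $x>0$. Hence the corollary follows immediately. I would close by noting that the parameter ranges match without loss: the preceding theorem is proved under the restriction $\beta\geq0$ (the monotonicity of the ratio $u_x^{(\alpha,\beta)}(t+\epsilon)/u_x^{(\alpha,\beta)}(\epsilon)$ needs it), and this is exactly the hypothesis $\beta\geq0$ imposed in the corollary.

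There is essentially no analytic obstacle here; the whole argument is a one-line consequence of the complementarity $G_\beta+G_\beta^*=1$ together with (\ref{I6}). The only point requiring care is the algebraic expansion, where one must track the cancellations correctly so as to confirm that the four-term expression really reduces to the two-term subadditivity defect for $G_\beta$ rather than acquiring spurious remainder terms.
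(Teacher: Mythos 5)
Your proof is correct and is exactly the paper's argument: the paper likewise obtains the corollary by substituting $G_\beta^*=1-G_\beta$ into the four-term expression and observing that it collapses to the subadditivity defect $G_\beta(x,\alpha,s)G_\beta(x,\alpha,s^\prime)-G_\beta(x,\alpha,s+s^\prime)$ covered by (\ref{I6}). You merely spell out the cancellation that the paper leaves implicit.
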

\begin{proof}the result follows immediately by combining the inequality (\ref{I6}) with $G_\beta^*(x,\alpha,s)+G_\beta(x,\alpha,s)=1.$
\end{proof}

\noindent\textbf{Acknowledgment}

The authors like to thank Prof. Alexander Apelblat for providing us with the copy of his book on
Volterra functions, it was very useful for the preparation of this paper.

\end{document}